\renewcommand{\epsilon}{\varepsilon}
\newtheorem{theorem}{Theorem}
\newtheorem{lemma}{Lemma}
\newtheorem{corollary}{Corollary}
\newcommand{\eqref}[1]{(\ref{#1})}
\newcommand{\argmax}{\operatorname{\arg\max}\limits}
\newcommand{\argmin}{\operatorname{\arg\min}\limits}
\newcommand{\minimize}{\operatorname{minimize}\limits}
\def\E{\mathrm{E}}
\def\sign{\operatorname{sign}}
\def\tr{\operatorname{tr}}
\def\half{\frac{1}{2}}
\def\halft{\frac{1}{2}}
\def\halftt{\tfrac{1}{2}}
\def\hbeta{\hat{\beta}}
\def\hu{\hat{u}}
\def\tbeta{\tilde{\beta}}
\def\tu{\tilde{u}}
\def\ty{\tilde{y}}
\def\tD{\widetilde{D}}
\def\lone{1}
\def\ltwo{2}
\def\linf{\infty}
\def\cB{\mathcal{B}}
\def\cG{\mathcal{G}}
\def\cM{\mathcal{M}}
\def\cN{\mathcal{N}}
\def\T{^T}
\def\df{\operatorname{df}}
\def\R{\mathbb{R}}
\begin{document}
\begin{frontmatter}

\title{The solution path of the generalized lasso}
\runtitle{The generalized Lasso}

\begin{aug}
\author[A]{\fnms{Ryan J.} \snm{Tibshirani}\corref{}\ead[label=e1]{ryantibs@stanford.edu}}
\and
\author[A]{\fnms{Jonathan} \snm{Taylor}\ead[label=e2]{jtaylo@stanford.edu}}
\runauthor{R. J. Tibshirani and J. Taylor}
\affiliation{Stanford University}
\address[A]{Department of Statistics\\
Stanford University\\
Stanford, California 94305\\
USA\\
\printead{e1}\\
\hphantom{\textsc{E-mail:}\ }\printead*{e2}} 
\end{aug}

\received{\smonth{8} \syear{2010}}
\revised{\smonth{1} \syear{2011}}

%
\begin{abstract}
We present a path algorithm for the generalized lasso problem. This
problem penalizes the $\ell_1$ norm of a matrix $D$ times the
coefficient vector, and has a wide range of applications,
dictated by the choice of $D$. Our algorithm is based on
solving the dual of the generalized lasso, which
greatly facilitates computation of the path.
For $D=I$ (the usual lasso), we draw a connection between our
approach and the well-known LARS algorithm.
For an arbitrary~$D$, we derive an unbiased estimate of the degrees
of freedom of the generalized lasso fit. This estimate turns out to
be quite intuitive in many applications.
\end{abstract}

%
\begin{keyword}[class=AMS]
\kwd{62-XX}.
\end{keyword}
\begin{keyword}
\kwd{Lasso}
\kwd{path algorithm}
\kwd{Lagrange dual}
\kwd{LARS}
\kwd{degrees of freedom}.
\end{keyword}

\end{frontmatter}

\section{\texorpdfstring{Introduction.}{Introduction}}\label{intro}
\label{sec:intro}

Regularization with the $\ell_1$ norm seems to be ubiquitous
throughout many fields of mathematics and engineering. In statistics,
the best-known example is the \textit{lasso}, the application of an
$\ell_1$ penalty to linear regression~\cite{lasso,bp}. Let $y \in
\R^n$ be a response vector and $X \in\R^{n \times p}$ be a matrix of
predictors. If the response and the predictors have been
centered, we can omit an intercept term from the model, and then
the lasso problem is commonly written as
%
\begin{equation}
\label{eq:lasso}
\minimize_{\beta\in\R^p}   \halft\|y-X\beta\|_\ltwo^2 +
\lambda\|\beta\|_\lone,
\end{equation}
where $\lambda\geq0$ is a tuning parameter. There are many fast
algorithms for solving the lasso~\eqref{eq:lasso} at a single
value of the parameter $\lambda$, or over a discrete set of parameter
values. The \textit{least angle regression} (LARS) algorithm, on the
other hand, is unique in that it solves~\eqref{eq:lasso} for all
$\lambda\in[0,\infty]$~\cite{lars} (see also the earlier \textit{homotopy} method of~\cite{homotopy}, and the even earlier work of
\cite{parqp}). This is possible because the lasso solution is
piecewise linear with respect to $\lambda$.

The LARS path algorithm may provide a computational advantage when
the solution is desired at many values of the tuning parameter. For
large problems, this is less likely to be the case because
the number of knots (changes in slope) in the solution path tends to
be very large, and this renders the path intractable.
Computational efficiency aside, the LARS method fully characterizes
the tradeoff between goodness-of-fit and sparsity in the lasso
solution (this is controlled by $\lambda$), and hence
yields interesting statistical insights into the problem. Most
notably, the LARS paper established a result on the degrees of freedom
of the lasso fit, which was further developed by
\cite{lassodf}.

The first of its kind, LARS inspired
the development of path algorithms for various other optimization
problems that appear in statistics~\cite{svmpath,rosset,holger,dasso},
and our case is no exception. In this paper, we derive a path
algorithm for problems that use the $\ell_1$ norm to enforce certain
structural constraints---instead of pure sparsity---on the
coefficients in a linear regression. These problems are nicely
encapsulated by the formulation:
%
\begin{equation}
\label{eq:dlasso}
\minimize_{\beta\in\R^p}   \halft\|y-X\beta\|_\ltwo^2 +
\lambda\|D\beta\|_\lone,
\end{equation}
where $D \in\R^{m \times p}$ is a specified penalty
matrix.
We refer to problem~\eqref{eq:dlasso} as the \textit{generalized lasso}.
Depending on the application, we choose $D$ so that
sparsity of $D\beta$ corresponds to some other desired behavior for
$\beta$, typically one that is structural or geometric in nature.
In fact, various choices of $D$ in~\eqref{eq:dlasso} give problems
that are already well-known in the literature: the fused
lasso, trend filtering, wavelet smoothing, and a method for outlier
detection. We derive a simple path algorithm for the minimization
\eqref{eq:dlasso}
that applies to a general matrix $D$, hence this entire class of
problems. Like the lasso, the generalized lasso solution is piecewise
linear as a function of $\lambda$. We also prove a result on the
degrees of freedom of the fit for a general $D$. It is worth noting
that problem~\eqref{eq:dlasso} has been considered by other
authors, for example,~\cite{tisp}. This last work establishes some
asymptotic properties of the solution, and
proposes a computational technique that relates to simulated
annealing.

The paper is organized as follows.
We begin in Section~\ref{sec:apps} by motivating the use of a penalty
matrix $D$, offering several examples of problems that fit into this
framework. Section~\ref{sec:lasso} explains that some instances of the
generalized lasso can be transformed into a regular lasso problem, but
many cannot, emphasizing the need for a new path approach.
In Section~\ref{sec:dual}, we derive the Lagrange dual
of~\eqref{eq:dlasso}, which serves as the jumping point for our
algorithm and all
of the work that follows. For the sake of clarity, we build up the
algorithm over the next 3 sections. Sections~\ref{sec:1d} and
\ref{sec:d} consider the case $X=I$. In Section~\ref{sec:1d}, we assume
that $D$ is the 1-dimensional fused lasso matrix, in which case our
path algorithm takes an especially simple (and intuitive) form. In
Section~\ref{sec:d}, we give the path algorithm for a general penalty
matrix $D$, which requires adding only one step in the iterative
loop. Section~\ref{sec:x} extends the algorithm to the case of a
general design matrix $X$. Provided that $X$ has full column rank, we
show that our path algorithm still applies, by rewriting the dual
problem in a more familiar form. We also outline a path approach for
the case when $X$ has rank less than its number of
columns. Practical considerations for the path's computation
are given in Section~\ref{sec:comp}.\looseness=1

In Section~\ref{sec:lars}, we focus on the lasso case, $D=I$,
and compare our method to LARS. Above, we described LARS as
an algorithm for computing the solution path of~\eqref{eq:lasso}.
This actually refers to LARS in its ``lasso'' state, and
although this is probably the best-known version of LARS, it is not
the only one. In its original (unmodified) state, LARS does not
necessarily optimize the lasso criterion, but instead performs a more
``democratic'' form of forward variable selection. It turns out
that with an easy modification, our algorithm gives this selection
procedure exactly. In Section~\ref{sec:df}, we derive an
unbiased estimate of the degrees of freedom of the fit for a general
matrix $D$.
The proof is quite straightforward because it utilizes the dual
fit, which is simply the projection onto a convex set. As we vary $D$,
this result yields interpretable estimates of the degrees of freedom
of the fused lasso, trend filtering, and more. Finally, Section
\ref{sec:discuss} contains some discussion.

To save space (and improve readability), many of the technical details
in the paper are deferred to a supplementary document~\cite{supp}.

\section{\texorpdfstring{Applications.}{Applications}}
\label{sec:apps}

There are a wide variety of interesting applications of problem
\eqref{eq:dlasso}. What we present below is not meant to be an
exhaustive list, but rather a set of illustrative examples that
motivated our work on this problem in the first place. This section is
split into two main parts: the case when $X=I$ (often called the
``signal approximation'' case), and the case when $X$ is a general
design matrix.

\subsection{\texorpdfstring{The signal approximation case, $X=I$.}{The signal approximation case, $X=I$}}

When $X=I$, the solution of the lasso problem~\eqref{eq:lasso} is
given by soft-thresholding the coordinates of $y$. Therefore, one
might think that an equally simple formula exists for the generalized
lasso solution when the design matrix is the identity---but this is
not true. Taking $X=I$ in the generalized lasso~\eqref{eq:dlasso}
gives an interesting and highly nontrival class of problems. In this
setup, we observe data $y \in\R^n$ which is a noisy realization of an
underlying signal, and the rows of $D \in\R^{m \times n}$ reflect
some believed structure or geometry in the signal. The solution of
problem~\eqref{eq:dlasso} fits adaptively to the data while exhibiting
some of these structural properties. We begin by looking at piecewise
constant signals, and then address more complex features.

\subsubsection{\texorpdfstring{The fused lasso.}{The fused lasso}}

Suppose that $y$ follows a 1-dimensional structure, that is,
the coordinates of $y$ correspond to successive positions on a
straight line. If $D$ is the $(n-1)\times n$ matrix
%
\begin{equation}
\label{eq:d1d}
D_\mathrm{1d} = \left[
\matrix{
-1 & 1 & 0 & \cdots & 0 & 0 \cr
0 & -1 & 1 & \cdots & 0 & 0 \cr
& & & \cdots & & \cr
0 & 0 & 0 & \cdots & -1 & 1
}
\right],
\end{equation}
then problem~\eqref{eq:dlasso} penalizes the absolute differences in
adjacent coordinates of~$\beta$, and is known as the \textit{1d fused
lasso}~\cite{fuse}. This gives a piecewise constant fit, and is used
in settings where coordinates in the true model are closely related to
their neighbors. A common application area is comparative genomic
hybridization (CGH) data: here $y$ measures the number of copies of
each gene ordered linearly along the genome (actually $y$ is the log
ratio of the number of copies relative to a normal sample),
and we believe for biological reasons that nearby genes will exhibit a
similar copy number.
Identifying abnormalities in copy number has become a
valuable means of understanding the development of many human
cancers.
See Figure~\ref{fig:cgh} for an example of the 1d fused lasso
applied to some CGH data on glioblastoma multiformes, a particular
type of malignant brain tumor, taken from~\cite{gbm}.

%
\begin{figure}

\includegraphics{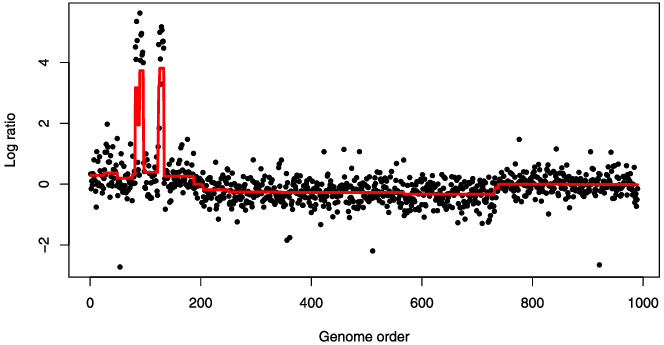}

\caption{The 1d fused lasso applied to some glioblastoma
multiforme data. The red line represents the inferred copy number
from the 1d fused lasso solution (for $\lambda=3$).}
\label{fig:cgh}
\end{figure}

A natural extension of this idea penalizes the differences between
neighboring pixels in an image.
Suppose that $y$ represents a noisy image that has been
unraveled into a vector, and each row of $D$ again has a $1$ and $-1$,
but this time arranged to give both the horizontal and vertical
differences between pixels. Then problem~\eqref{eq:dlasso}
is called the \textit{2d fused lasso}~\cite{fuse}, and is used to denoise
images that we believe should obey a piecewise constant
structure. This technique is actually a special type of
\textit{total variation denoising},
a well-studied problem that carries a vast literature spanning the
fields of statistics, computer science, electrical engineering, and
others (see~\cite{tv}, e.g.). Figure~\ref{fig:s} shows the 2d
fused lasso applied to a toy example.

%
\begin{figure}

\includegraphics{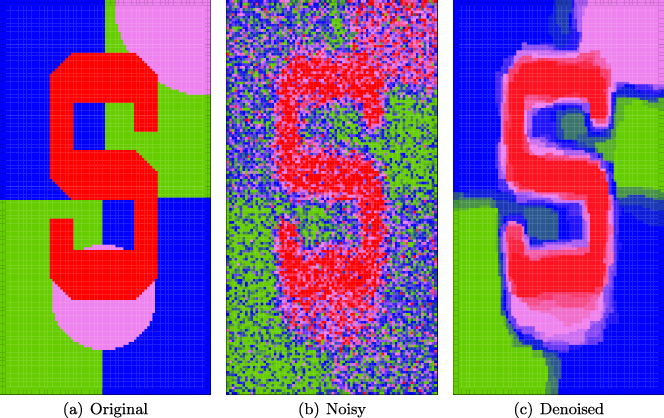}

\caption{An example of the 2d fused lasso for image denoising. We started
with a toy signal, shown in \textup{(a)}. The colors green, blue,
purple, red
in the image correspond to the numeric levels $1,2,3,4$,
respectively. We then added noise, shown in \textup{(b)},
interpolating between colors to display the intermediate
values. This is used as the data $y$ in the 2d fused lasso
problem. The solution (for $\lambda=1$) is shown in \textup{(c)}, and
it is a
fairly accurate reconstruction. The method is effective here because
the original image is piecewise constant.}
\label{fig:s}
\end{figure}

We can further extend this idea by defining adjacency according to an
arbitrary graph structure, with $n$ nodes and $m$ edges.
Now the coordinates of $y \in\R^n$ correspond to nodes in the graph,
and we penalize the difference between each pair of nodes joined by an
edge. Hence $D$ is $m \times n$, with each row having a $-1$
and $1$ in the appropriate spots, corresponding to an edge in the
graph. In this case, we simply call problem~\eqref{eq:dlasso} the
\textit{fused lasso}. Note that both the 1d and 2d fused lasso problems
are special cases of this, with the underlying graph
a chain and a 2d grid, respectively. But the fused lasso is a
very general problem, as it can be applied to any graph structure that
exhibits a piecewise constant signal across adjacent nodes. See
Figure~\ref{fig:us} for application in which the underlying graph has
US states as nodes, with two states joined by an edge if they
share a border. This graph has 48 nodes (we only include the mainland
US states) and 105 edges.

%
\begin{figure}

\includegraphics{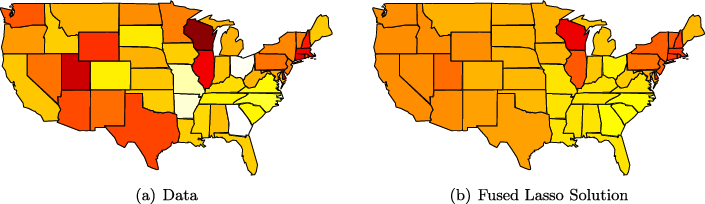}

\caption{An example of the fused lasso on an irregular
graph. The data $y$ are the log proportion of H1N1 flu cases
for each (mainland) US state in the year 2009, shown in~\textup{(a)}. This
was taken from \protect\cite{h1n1}. The color map uses
white to reflect the lowest measured log proportion, and dark red
to reflect the highest, with yellow, orange, and red in between.
We can think of the data as noisy measurements of the
true log probabilities of infection in each state, which likely
exhibits some geographic trend. Therefore, we solve the fused lasso
problem on a custom underlying graph, where we connect two states by
an edge if they share a border. Shown in \textup{(b)} is the solution
(for $\lambda=0.25$).
Here, groups of states are assigned the same color or
``fused'' on the west coast, in the mid west, in the south east, and
in the north east. The colors suggest that, among these regions, you
are most likely to get H1N1 flu if you live in the north east, then
the west coast, then the midwest, and then the south east. But there
certainly are states that do not get fused into these regions, like
Wisconsin and Illinois, where the infection rates are exceptionally
high.}
\label{fig:us}
\end{figure}

The observant reader may notice a discrepancy between the usual
fused lasso definition and ours, as the fused lasso penalty typically
includes an additional term $\|\beta\|_1$, the $\ell_1$ norm of the
coefficients themselves. We refer to this as the \textit{sparse fused
lasso}, and to represent this penalty we just append the
$n\times n$ identity matrix to the rows of $D$. Actually, this carries
over to all of the applications yet to be discussed---if we desire
pure sparsity in addition to the structural behavior that is being
encouraged by $D$, we append the identity matrix to the rows of
$D$.

\subsubsection{\texorpdfstring{Linear and polynomial trend filtering.}{Linear and polynomial trend filtering}}

Suppose again that $y$ follows a 1-dimensional
structure, but now $D$ is the $(n-2)\times n$ matrix
\[
\label{eq:dtf}
D_{\mathrm{tf},1} = \left[
\matrix{\displaystyle
-1 & 2 & -1 & \cdots & 0 & 0 & 0 \cr\displaystyle
0 & -1 & 2 & \cdots & 0 & 0 & 0 \cr\displaystyle
\cdots & & & & & & \cr\displaystyle
0 & 0 & 0 & \cdots & -1 & 2 & -1
}
\right].
\]
Then problem~\eqref{eq:dlasso} is equivalent to
\textit{linear trend filtering} (also called
\textit{$\ell_1$ trend filtering})~\cite{l1tf}.
Just as the 1d fused lasso penalizes the
discrete first derivative, this technique penalizes the discrete
second derivative, and so it gives a piecewise linear fit. This
has many applications, namely, any settings in which the underlying
trend is believed to be linear with (unknown) changepoints. Moreover,
by recursively defining
\[
\label{eq:Dtfk}
D_{\mathrm{tf},k} = D_\mathrm{1d} \cdot D_{\mathrm{tf},k-1}   \qquad
\mbox{for } k=2,3,\ldots
\]
[here $D_\mathrm{1d}$ is the $(n-k-1)\times(n-k)$ version of
\eqref{eq:d1d}], we can fit a piecewise polynomial of any order $k$,
further extending the realm of applications.
We call this \textit{polynomial trend filtering of order $k$}.
Figure~\ref{fig:tf} shows examples of linear, quadratic, and cubic
fits.

%
\begin{figure}

\includegraphics{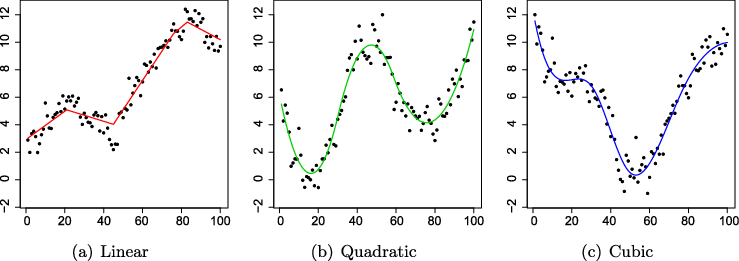}

\caption{Solutions of \protect\eqref{eq:dlasso} for three problems, with
$D$ equal to \textup{(a)} $D_{\mathrm{tf},1}$, \textup{(b)}
$D_{\mathrm{tf},2}$ and
\textup{(c)} $D_{\mathrm{tf},3}$.
These are piecewise linear, quadratic and
cubic, respectively. (For each problem we chose a
different value of the regularization parameter $\lambda$.)}
\label{fig:tf}
\end{figure}

The polynomial trend filtering fits (especially for $k=3$) are similar
to those that one could obtain using regression splines and smoothing
splines. However, the knots (changes in $k$th derivative) in the trend
filtering fits are selected adaptively based on the data, jointly with
the inter-knot polynomial estimation. This phenomenon of simultaneous
selection and estimation---analogous to that concerning the
nonzero coefficients in the lasso fit, and the jumps in the piecewise
constant fused lasso fit---does not occur in
regression and smoothing splines. Regression splines operate on a
fixed set of knots, and there is a substantial literature on
knot placement for this problem (see Chapter 9.3 of~\cite{gam}, e.g.). Smoothing splines place a knot at each data point, and
implement smoothness via a generalized ridge regression on the
coefficients in a natural spline basis. As a result (of this
$\ell_2$ shrinkage), they cannot represent both global smoothness and
local wiggliness in a signal. On the other hand, trend filtering has
the potential to represent both such
features, a~property called ``time and frequency localization''
in the signal processing field, though this idea has been largely
unexplored. The classic example of a procedure that allows
time and frequency localization is wavelet smoothing,
discussed next.

\subsubsection{\texorpdfstring{Wavelet smoothing.}{Wavelet smoothing}}

This is a quite a popular method
in signal processing and compression. The main idea is to model the
data as a sparse linear combination of wavelet functions. Perhaps
the most common formulation for wavelet smoothing is \textit{SURE
shrinkage}~\cite{sure}, which solves the lasso optimization problem
%
\begin{equation}
\label{eq:trans}
\minimize_{\theta\in\R^n}   \halft\|y-W\theta\|_\ltwo^2 +
\lambda\|\theta\|_\lone,
\end{equation}
where $W \in\R^{n\times n}$ has an orthogonal wavelet basis along its
columns. By orthogonality, we can change variables to $\beta=W\theta$
and then~\eqref{eq:trans} becomes a generalized lasso problem with
$D=W\T$.

In many applications it is desirable to use an overcomplete wavelet
set, so that $W \in\R^{n\times m}$ with $n < m$. Now problem
\eqref{eq:trans} and the generalized lasso~\eqref{eq:dlasso} with
$D=W\T$ (and $X=I$) are no longer equivalent, and in fact give quite
different answers. In signal processing, the former is called the
\textit{synthesis} approach, and the latter the \textit{analysis}
approach, to wavelet smoothing.
Though attention has traditionally been centered around
synthesis, a recent paper by Elad, Milanfar and Rubinstein~\cite{avs} suggests that
synthesis may be too sensitive, and shows that it
can be outperformed by its analysis counterpart.

\subsection{\texorpdfstring{A general design matrix $X$.}{A general design matrix $X$}}

For any of the fused lasso, trend filtering, or wavelet smoothing
penalties discussed above, the addition of a general matrix $X$ of
covariates significantly extends the domain of applications. For a
fused lasso example, suppose that each row of $X$ represents a
$k_1 \times k_2 \times k_3$ MRI image of a patient's brain,
unraveled into a vector (so that $p=k_1 \cdot k_3 \cdot k_3$).
Suppose that $y$ contains some continuous outcome on the patients, and
we model these as a linear function of the MRIs,
$\E(y_i|X_i)=\beta\T X_i$. Now $\beta$ also has
the structure of a $k_1 \times k_2 \times k_3$ image, and by
choosing the matrix $D$ to give the sparse 3d fused lasso penalty
(i.e., the fused lasso on a 3d grid with an additional $\ell_1$
penalty of the coefficients), the solution of~\eqref{eq:dlasso}
attempts to explain the outcome with a small number of contiguous
regions in the brain.

As another example, the inclusion of a design matrix $X$ in the trend
filtering setup provides an alternative way of fitting
\textit{varying-coefficient models}~\cite{vcm,lrm}.
We consider a data set from~\cite{vcm}, which examines
$n=88$ observations on the exhaust from an engine fueled by
ethanol. The response $y$ is the concentration of nitrogen
dioxide, and the two predictors are a measure of the fuel-air ratio
$E$, and the compression ratio of the engine $C$. Studying the
interactions between $E$ and $C$ leads the authors of~\cite{vcm} to
consider the model
%
\begin{equation}
\label{eq:vcm}
\E(y_i|E_i,C_i) = \beta_0(E_i) + \beta_1(E_i)\cdot C_i.
\end{equation}
This is a linear model with a different intercept and
slope for each $E_i$, subject to the (implicit) constraint that
the intercept and slope should vary smoothly along the $E_i$'s.
We can fit this using~\eqref{eq:dlasso}, in the
following way: first we discretize the continuous observations
$E_1,\ldots,  E_n$ so that they lie into, say, 25 bins. Our design
matrix $X$ is $88 \times50$, with the first 25 columns modeling the
intercept $\beta_0$ and the last 25 modeling the slope $\beta_1$. The
$i$th row of $X$ is
\[
X_{ij} =
\cases{\displaystyle
1 ,&\quad  if $E_i$ lies in the $j$th bin,\cr\displaystyle
C_i ,&\quad  if $E_i$ lies in the $(j+25)$th bin,\cr\displaystyle
0 ,&\quad  otherwise.
}
\]
Finally, we choose
\[
D = \left[
\matrix{\displaystyle
D_{\mathrm{tf},3} & 0 \cr\displaystyle
0 & D_{\mathrm{tf},3}
}
\right],
\]
where $D_{\mathrm{tf},3}$ is the cubic trend filtering matrix (the
choice $D_{\mathrm{tf},3}$ is not crucial and of course can be
replaced by a higher or lower order trend filtering matrix). The
matrix $D$ is structured in this way so that we penalize the
smoothness of the first 25 and last 25 components of
$\beta=(\beta_0,\beta_1)\T$ individually. With $X$ and $D$ as
described, solving the optimization problem~\eqref{eq:dlasso} gives
the coefficients shown in Figure~\ref{fig:vcm}, which appear quite
similar to the fits in~\cite{vcm}.

%
\begin{figure}

\includegraphics{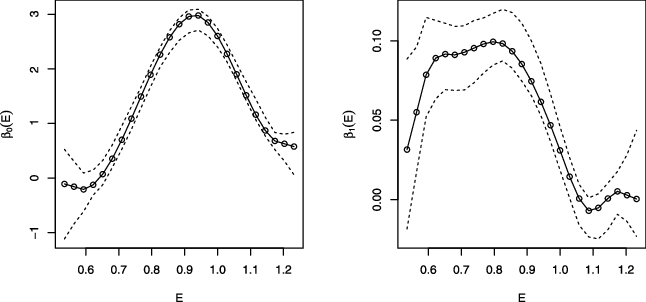}

\caption{The intercept and slope of the varying-coefficient
model \protect\eqref{eq:vcm} for the engine data of \protect\cite{vcm}, fit using
\protect\eqref{eq:dlasso} with a cubic trend filtering penalty matrix (and
$\lambda=3$). The dashed lines show 85\% bootstrap confidence
intervals from 500 bootstrap samples.}
\label{fig:vcm}
\end{figure}

We conclude this section with a generalized lasso application of
\cite{owenoutlie}, in which the penalty is not structurally-based,
unlike the examples discussed previously.
Suppose that we observe $y_1,\ldots,  y_n$, and we believe the majority
of these points follow a linear model $\E(y_i|X_i) = \beta\T X_i$ for
some covariates $X_i=(X_{i1},\ldots,  X_{ip})\T$, except that a small
number of the $y_i$ are outliers and do not come from this model. To
determine which points are outliers, one might consider the
problem
%
\begin{equation}
\label{eq:outlie}
\minimize_{z \in\R^n,   \beta\in\R^p}   \halft\|z-X\beta\|_2^2
 \qquad \mbox{subject to }   \|z-y\|_0 \leq k
\end{equation}
for a fixed integer $k$. Here $\|x\|_0 = \sum_i 1(x_i \not= 0)$. Thus
by setting $k=3$, for example, the solution $\hat{z}$ of
\eqref{eq:outlie} would indicate which 3 points should be considered
outliers, in that $\hat{z}_i \not= y_i$ for exactly 3 coordinates.
A natural convex relaxation of problem~\eqref{eq:outlie} is
%
\begin{equation}
\label{eq:outlie1}
\minimize_{z \in\R^n,   \beta\in\R^p}   \halft\|z-X\beta\|_2^2 +
\lambda\|z-y\|_1,
\end{equation}
where we have also transformed the problem from bound form to Lagrange
form. Letting $\alpha=y-z$, this can be rewritten as
%
\begin{equation}
\label{eq:outlie2}
\minimize_{\alpha\in\R^n,   \beta\in\R^p}   \halft
\|y-\alpha-X\beta\|_2^2 +
\lambda\|\alpha\|_1,
\end{equation}
which fits into the form of problem~\eqref{eq:dlasso}, with design
matrix $\widetilde{X}=\left[ I \enskip X \right]$, coefficient vector
$\tbeta=(\alpha,\beta)\T$, and
penalty matrix $D=\left[ I \enskip 0 \right]$. Figure~\ref{fig:outlie}
shows a simple example with $p=1$.

%
\begin{figure}

\includegraphics{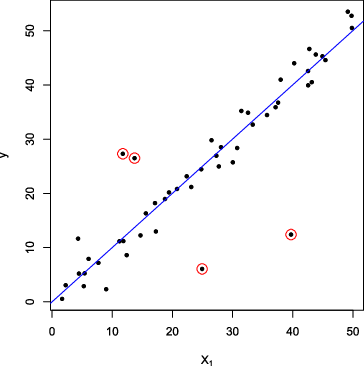}

\caption{A simple example of using problem
\protect\eqref{eq:dlasso} to perform outlier detection. Written in
the form
\protect\eqref{eq:outlie2}, the blue line denotes the fitted slope
$\hbeta$, while the red circles indicate the outliers, as
determined by the coordinates of $\hat\alpha$ that are nonzero
(for $\lambda=8$).}
\label{fig:outlie}
\end{figure}

After reading the examples in this section, a natural question is:
when can a generalized lasso problem~\eqref{eq:dlasso}
be transformed into a regular lasso problem~\eqref{eq:lasso}?
(Recall, e.g., that this is possible for an orthogonal $D$,
as we discussed in the wavelet smoothing example.) We discuss this in
the next section.

\section{When does a generalized lasso problem reduce to a lasso problem?}
\label{sec:lasso}

If $D$ is $p \times p$ and invertible, we can transform
variables in problem~\eqref{eq:dlasso} by $\theta=D\beta$, yielding
the lasso problem
%
\begin{equation}
\label{eq:dlasso2}
\minimize_{\theta\in\R^p}   \halft\|y-XD^{-1}\theta\|^2_2 +
\lambda\|\theta\|_1.
\end{equation}
More generally, if $D$ is $m \times p$ and $\operatorname{rank}(D)=m$ (note
that this necessarily means $m\leq p$), then we can still transform
variables and get a lasso problem. First, we construct a
$p \times p$ matrix
$\tD=\left[
 {D\atop  A}
\right]$
with $\operatorname{rank}(\tD)=p$, by finding a $(p-m)\times p$ matrix $A$
whose rows are orthogonal to those in $D$. Then we change variables to
$\theta=(\theta_1,\theta_2)\T
= \tD\beta$, so that the generalized lasso~\eqref{eq:dlasso} becomes
%
\begin{equation}
\label{eq:dlasso3}
\minimize_{\theta\in\R^p}   \halft\|y-X\tD^{-1}\theta\|^2_2 +
\lambda\|\theta_1\|_1.
\end{equation}
This is almost a regular lasso, except that the $\ell_1$
penalty only covers part of the coefficient vector. First, write
$X\tD^{-1}\theta= X_1\theta_1 + X_2\theta_2$; then, it is clear that
at the solution the second block of the coefficients is given by
a linear regression:
\[
\hat{\theta}_2 = (X_2\T X_2)^{-1} X_2\T(y-X_1\hat{\theta}_1).
\]
Therefore, we can rewrite problem~\eqref{eq:dlasso3} as
%
\begin{equation}
\label{eq:dlasso4}
\minimize_{\theta_1 \in\R^m}   \halft\|(I-P)y-(I-P)X_1\theta_1\|^2_2
+ \lambda\|\theta_1\|_1,
\end{equation}
where $P=X_2\T(X_2\T X_2)^{-1}X_2\T$, the projection onto the column
space of $X_2$. The LARS algorithm provides the solution path of such
a lasso problem~\eqref{eq:dlasso4}, from which we can back-transform
to get the generalized lasso solution:
$\hbeta= \tD^{-1} \hat{\theta}$.

However, if $D$ is $m \times p$ and $\operatorname{rank}(D)<m$, then such a
transformation is not possible, and LARS cannot be used to find the
solution path of the generalized lasso problem
\eqref{eq:dlasso}. Further, in this case, the authors of~\cite{avs}
establish what they call an ``unbridgeable'' gap between problems
\eqref{eq:lasso} and~\eqref{eq:dlasso}, based on the
geometric properties of their solutions.

%
\begin{table}[b]
\tabcolsep=0pt
\caption{Examples from Section \protect\ref{sec:apps} that fall into
the cases $\operatorname{rank}(D)=m$ and $\operatorname{rank}(D)<m$}
\label{table:rank}
\begin{tabular*}{\textwidth}{@{\extracolsep{\fill}}p{177pt}p{175pt}@{}}
\hline
$\bolds{\operatorname{rank}(D)=m}$ &\multicolumn{1}{l@{}}{$\bolds{\operatorname{rank}(D)<m}$}\\
\hline
  \mbox{$\bullet$ The 1d fused lasso} \mbox{$\bullet$ Polynomial trend filtering of any order} \mbox{$\bullet$ Wavelet smoothing with an orthogonal} \mbox{\hphantom{$\bullet$ }wavelet basis}\break  \mbox{$\bullet$ Outlier detection}
  & \mbox{$\bullet$ The fused lasso on any graph that has} \mbox{\hphantom{$\bullet$ }more edges $m$ than nodes $p$ (e.g., the 2d} \mbox{\hphantom{$\bullet$ }fused lasso)}\break  \mbox{$\bullet$ The sparse fused lasso on any graph} \mbox{$\bullet$ Wavelet smoothing with an overcomplete}\break  \mbox{\hphantom{$\bullet$ }wavelet set}
 \\
  \hline
\end{tabular*}
\end{table}

While several of the examples from Section~\ref{sec:apps} satisfy
$\operatorname{rank}(D)=m$, and hence admit a lasso transformation, a good
number also fall into the case $\operatorname{rank}(D)<m$, and suggest the
need for a novel path algorithm. These are summarized in Table
\ref{table:rank}. Therefore, in the next section, we derive the
Lagrange dual of problem~\eqref{eq:dlasso}, which leads to a nice
algorithm to compute the solution path of~\eqref{eq:dlasso} for an
arbitrary penalty matrix $D$.

\section{\texorpdfstring{The Lagrange dual problem.}{The Lagrange dual problem}}
\label{sec:dual}

First, we consider the generalized lasso in the signal
approximation case, $X=I$:
%
\begin{equation}
\label{eq:dlassoi}
\minimize_{\beta\in\R^n}   \halft\|y-\beta\|_\ltwo^2 +
\lambda\|D\beta\|_\lone.
\end{equation}
Essentially, problem~\eqref{eq:dlassoi} is difficult to analyze
directly because the nondifferentiable $\ell_1$ penalty is composed
with a linear transformation of $\beta$.
Following an argument of~\cite{l1tf}, we rewrite the problem as
\[
\minimize_{\beta\in\R^n,   z \in\R^m}   \halft\|y-\beta\|
_\ltwo^2
+ \lambda\|z\|_\lone  \qquad  \mbox{subject to }   D\beta=z.
\]
The Lagrangian is hence
\[
\halftt\|y-\beta\|_\ltwo^2 + \lambda\|z\|_\lone+ u\T(D\beta-z),
\]
and to derive the dual problem, we minimize this over
$\beta,z$. The terms involving $\beta$ are just a quadratic, and
up to some constants (not depending on~$u$)\looseness=-1
\[
\min_\beta \biggl(\halft\|y-\beta\|_\ltwo^2 + u\T D\beta \biggr) =
-\halft\|y-D\T u\|_\ltwo^2,
\]\looseness=0
while
\[
\min_z  (\lambda\|z\|_\lone- u\T z ) =
\cases{\displaystyle
0 ,&\quad  if $\|u\|_\linf\leq\lambda$,\cr\displaystyle
-\infty,&\quad  otherwise.
}
\]
Therefore, the dual problem of~\eqref{eq:dlassoi} is
%
\begin{equation}
\label{eq:dual}
\minimize_{u \in\R^m}   \halft\|y-D\T u\|_\ltwo^2  \qquad  \mbox{subject
to }  \|u\|_\linf\leq\lambda.
\end{equation}
Immediately, we can see that~\eqref{eq:dual} has a ``nice''
constraint set, $\{u \dvtx  \|u\|_\linf\leq\lambda\}$, which is simply
a box, free of any linear transformation. It is also important to
note the difference in dimension: the dual problem has a variable $u
\in\R^m$, whereas the original problem~\eqref{eq:dlassoi}, called the
primal problem, has a variable $\beta\in\R^n$.

When $\operatorname{rank}(D)<m$, the dual problem is not strictly
convex, and so it can have many solutions. On the other hand, the
primal problem is always strictly convex and always has a unique
solution.
The primal problem is also strictly feasible (it has no constraints),
and so strong duality holds (see Section 5.2 of~\cite{convex}).
The primal\vspace*{1pt} and dual solutions---written as $\hbeta_\lambda$ and
$\hu_\lambda$, respectively, to emphasize the dependence on
$\lambda$---are related by
%
\begin{equation}
\label{eq:primaldual}
\hbeta_\lambda= y - D\T\hu_\lambda.
\end{equation}
Furthermore, each coordinate $i=1,\ldots,  m$ of the dual solution
satisfies
%
\begin{equation}
\label{eq:dualsign}
\hu_{\lambda,i} \in
\cases{\displaystyle
\{+\lambda\} ,&\quad  if $(D\hbeta_\lambda)_i > 0$,\cr\displaystyle
\{-\lambda\} ,&\quad  if $(D\hbeta_\lambda)_i < 0$,\cr\displaystyle
[-\lambda,\lambda] ,&\quad  if $(D\hbeta_\lambda)_i = 0$.
}\vadjust{\goodbreak}
\end{equation}
This last equation tells us that the dual
coordinates that are equal to $\lambda$ in absolute value,
%
\begin{equation}
\label{eq:bset}
\cB=\{ i \dvtx  |\hu_{\lambda,i}| = \lambda\},
\end{equation}
are the coordinates of $D\hbeta_\lambda$ that are ``allowed''
to be nonzero. But this does necessarily mean that
$(D\hbeta_\lambda)_i \not= 0$ for all $i \in\cB$.

For a general design matrix $X$, we can apply a similar argument
to derive the dual of~\eqref{eq:dlasso}:
%
\begin{eqnarray}
\label{eq:xdual}
\minimize_{u \in\R^m}  \halft(X\T y-D\T u)\T(X\T X)^+ (X\T y - D\T u)
\nonumber
\\[-8pt]
\\[-8pt]
\eqntext{\mbox{subject to }
 \|u\|_\linf\leq\lambda, \ D\T u \in\operatorname{row}(X).}
\end{eqnarray}
This looks complicated, certainly in comparison to problem
\eqref{eq:dual}. However, the inequality constraint on $u$ is still a
simple (untransformed) box. Moreover, we can make~\eqref{eq:xdual}
look like~\eqref{eq:dual} by a change of variables. This will be
discussed later in Section~\ref{sec:x}.

In the next two sections, Sections~\ref{sec:1d} and~\ref{sec:d}, we
restrict our attention to the case $X=I$ and derive an algorithm to
find a solution path of the dual~\eqref{eq:dual}.
This gives the desired primal solution path, using the
relationship~\eqref{eq:primaldual}. Since our focus is on solving
the dual problem, we write simply ``solution'' or ``solution path'' to
refer to the dual versions. Though we will eventually consider an
arbitrary matrix $D$ in Section~\ref{sec:d}, we begin by studying
the 1d fused lasso in Section~\ref{sec:1d}. This case is especially
simple, and we use it to build the framework for the path algorithm in
the general case.

\section{\texorpdfstring{The 1d fused lasso.}{The 1d fused lasso}}
\label{sec:1d}

In this setting, we have $D=D_\mathrm{1d}$, the $(n-1)\times n$ matrix
given in~\eqref{eq:d1d}. Now the dual problem~\eqref{eq:dual}
is strictly convex (since $D_\mathrm{1d}$ has rank equal to its number
of rows), and therefore it has a unique solution. In order to
efficiently compute the solution path, we use a lemma that allows us,
at different stages, to reduce the dimension of the problem by one.

\subsection{\texorpdfstring{The boundary lemma.}{The boundary lemma}}
\label{sec:blem}
Consider the constraint set $\{u\dvtx  \|u\|_\linf\leq\lambda\} \subseteq
\R^{n-1}$: this is a box centered around the origin with side
length $2\lambda$. We say that coordinate $i$ of $u$ is ``on the
boundary'' (of this box) if $|u_i|=\lambda$. For the 1d fused lasso,
it turns out that coordinates of the solution that are on
the boundary will remain on the boundary indefinitely as $\lambda$
decreases.
This idea can be stated more precisely as follows.
\begin{lemma}[(The boundary lemma)]
\label{lemma:boundary} Suppose that $D=D_\mathrm{1d}$, the 1d
fused lasso matrix in~\eqref{eq:d1d}. For any coordinate $i$, the
solution $\hu_\lambda$ of~\eqref{eq:dual} satisfies
\[
\hu_{\lambda_0,i} = \lambda_0   \quad  \Rightarrow \quad
\hu_{\lambda,i} = \lambda \qquad
\mbox{for all }   \lambda\in[0,\lambda_0]
\]
and
\[
\hu_{\lambda_0,i} = -\lambda_0  \quad  \Rightarrow \quad
\hu_{\lambda,i} = -\lambda \qquad
\mbox{for all }   \lambda\in[0,\lambda_0].\vadjust{\goodbreak}
\]
\end{lemma}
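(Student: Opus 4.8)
I would reduce the lemma to a monotonicity statement about the primal blocks and prove it by a continuation argument as $\lambda$ decreases. First, recall the ingredients already available: the dual \eqref{eq:dual} is strictly convex here (since $D_\mathrm{1d}$ has full row rank, $\|D\T u\|_\ltwo^2$ is positive definite in $u$), so $\hu_\lambda$ is unique and, as a parametric strictly convex QP whose feasible box scales with $\lambda$, depends continuously on $\lambda$ and is affine on each of the finitely many intervals on which the boundary set $\cB$ in \eqref{eq:bset} is constant. By the sign relation \eqref{eq:dualsign}, the interior coordinates $i\notin\cB$ satisfy $(D\hbeta_\lambda)_i=0$, i.e.\ $\hbeta_{\lambda,i}=\hbeta_{\lambda,i+1}$; hence $\hbeta_\lambda$ is piecewise constant, the breaks occurring at coordinates in $\cB$. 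A coordinate $i$ with $\hu_{\lambda,i}=\lambda$ thus separates a block $B_L=\{a,\dots,i\}$ from a block $B_R=\{i+1,\dots,b\}$ of equal primal values $c_L,c_R$, and leaving the upper boundary as $\lambda$ decreases forces $i$ to become interior, i.e.\ forces $B_L$ and $B_R$ to fuse ($c_R-c_L=0$). So it suffices to show that, once $i$ is on the upper boundary, the two adjacent blocks cannot fuse as $\lambda$ decreases (the lower-boundary case is symmetric).

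The key computation is an explicit formula for the block value. Using \eqref{eq:primaldual}, $\hbeta_{\lambda,j}=y_j-u_{j-1}+u_j$ with the convention $u_0=u_n=0$, so summing over a block $B=\{a,\dots,b\}$ and telescoping gives
\[
c_B=\frac{1}{|B|}\Big(\sum_{j\in B}y_j + \hu_{\lambda,b}-\hu_{\lambda,a-1}\Big)=\bar y_B+\frac{\hu_{\lambda,b}-\hu_{\lambda,a-1}}{|B|}.
\]
Applying this to $B_L$ and $B_R$ (whose shared endpoint is coordinate $i$ with $\hu_{\lambda,i}=\lambda$) yields
\[
c_R-c_L=(\bar y_{B_R}-\bar y_{B_L})+\frac{\hu_{\lambda,b}-\lambda}{|B_R|}-\frac{\lambda-\hu_{\lambda,a-1}}{|B_L|}.
\]
On an interval where $\cB$ is constant, each endpoint value $\hu_{\lambda,a-1},\hu_{\lambda,b}$ equals $+\lambda$, $-\lambda$, or $0$ (a vector end), so both fractions are nonpositive and linear in $\lambda$ with nonpositive slope. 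Therefore $c_R-c_L$ is an affine function of $\lambda$ whose slope is $\le 0$; equivalently, $c_R-c_L$ is nondecreasing as $\lambda$ decreases.

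With this in hand I would argue by continuation. At the given $\lambda_0$ we have $\hu_{\lambda_0,i}=\lambda_0$, and the sign relation \eqref{eq:dualsign} gives $c_R-c_L\ge 0$ there. Decreasing $\lambda$ within the current segment, the gap $c_R-c_L$ can only grow by the slope computation, so it stays strictly positive (away from $0$) and the blocks do not fuse; hence $i$ remains on the upper boundary, $\hu_{\lambda,i}=\lambda$, throughout the segment. By continuity of $\hu_\lambda$ this boundary membership carries across each knot into the next segment, and iterating over the finitely many segments propagates $\hu_{\lambda,i}=\lambda$ all the way down to $\lambda=0$.

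The main obstacle is the degenerate case in which $c_R-c_L\equiv 0$ on the boundary (slope zero and $\bar y_{B_L}=\bar y_{B_R}$), where the sign relation \eqref{eq:dualsign} permits $\hu_{\lambda,i}$ to take any value in $[-\lambda,\lambda]$ and so does not by itself pin the coordinate to $\lambda$. Here I would fall back on strict convexity: since the dual solution is unique, it is enough to verify that $\hu_{\lambda,i}=\lambda$ is the minimizer of the dual objective over $\hu_{\lambda,i}\in[-\lambda,\lambda]$ with the remaining coordinates held at their optimal values, which reduces to checking that the one-dimensional quadratic in $\hu_{\lambda,i}$ has its unconstrained minimizer at or beyond $\lambda$; the condition $c_R-c_L\ge 0$ is exactly what guarantees this. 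Making this local-optimality reduction rigorous, together with confirming the standard continuity and finite-knot structure of $\hu_\lambda$ invoked above, is where the real work lies; the block-value formula and its monotonicity in $\lambda$ are the conceptual heart and should go through directly.
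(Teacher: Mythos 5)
Your approach is genuinely different from the paper's. The paper proves this lemma (in the online supplement) at the level of generality it needs again in Section \ref{sec:d}: it works with the KKT system \eqref{eq:kkt}, \eqref{eq:a}--\eqref{eq:e} and shows that the path constructed with boundary coordinates permanently pinned at $\pm\lambda$ continues to satisfy all the optimality conditions as $\lambda$ decreases, the driving inequality being diagonal dominance of $DD\T$ as in \eqref{eq:dd} (which $D_\mathrm{1d}$ satisfies with both sides equal to $2$); strict convexity then identifies this constructed path with $\hu_\lambda$. Your argument is instead chain-specific and primal: the telescoping block-average identity $c_B=\bar y_B+(\hu_{\lambda,b}-\hu_{\lambda,a-1})/|B|$ and the monotonicity of $c_R-c_L$ as $\lambda$ decreases are correct (each endpoint contribution is $0$, $-\lambda/|B|$, or $-2\lambda/|B|$, so the slope claim checks out), and this is an attractive, interpretable route---essentially the dual twin of the fusion lemma \eqref{eq:fuselem} of \cite{pco}. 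What it buys is transparency in the 1d case; what it loses is the extension to all diagonally dominant $DD\T$, which the paper's algebraic proof delivers for free.

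There is, however, a genuine gap in your treatment of the degenerate case, and it is more than a formality. When $c_R-c_L=0$, your fallback is a one-dimensional optimality check of $\hu_{\lambda,i}=\lambda$ ``with the remaining coordinates held at their optimal values.'' This is circular as stated: the certificate $c_R-c_L\geq 0$ is computed from the block formula, which itself presupposes $\hu_{\lambda,i}=\lambda$; and because $(DD\T u)_i$ couples $u_i$ to $u_{i-1}$ and $u_{i+1}$, setting $u_i=\lambda$ while freezing the other coordinates destroys the coordinatewise optimality of the neighbors, so a single 1-d check at such a hybrid point does not certify global optimality even though the constraint set is a box. To close this rigorously you must build a full candidate (all boundary coordinates at $\lambda s$, interior coordinates given by least squares as in \eqref{eq:ls}) and verify every KKT condition simultaneously, by induction over the segments---at which point you have reconstructed the skeleton of the paper's proof, with your block-monotonicity computation playing the role that diagonal dominance plays there. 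A cleaner repair for the fully degenerate subcase ($\hu_{\lambda,a-1}=\hu_{\lambda,b}=\lambda$ and $\bar y_{B_L}=\bar y_{B_R}$) is available in your own framework: since $\hu_{\lambda,j}=\sum_{k\leq j}(\hbeta_{\lambda,k}-y_k)$, the dual is determined by the primal, and the merged block has common value $\bar y_B=\bar y_{B_L}$, whence $\hu_{\lambda,i}=\lambda+|B_L|(\bar y_{B_L}-\bar y_{B_L})=\lambda$ exactly; using this determinism in place of the 1-d quadratic check removes the circularity, though the inductive bookkeeping across block splits at knots still needs to be written out.
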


The proof is given in~\cite{supp}. It is interesting to note
a connection between the boundary lemma and a lemma of
\cite{pco}, which states that
%
\begin{equation}
\label{eq:fuselem}
\hat{\beta}_{\lambda_0,i} = \hat{\beta}_{\lambda_0,i+1}  \quad  \Rightarrow \quad
\hat{\beta}_{\lambda,i} = \hat{\beta}_{\lambda,i+1}  \qquad  \mbox{for all }
\lambda\geq\lambda_0
\end{equation}
for this same problem. In other words, this lemma says that no two
equal primal coordinates can become unequal with increasing $\lambda$.
In general $|\hu_{\lambda,i}|=\lambda$ is not equivalent to
$(D\hbeta_\lambda)_i\not=0$, but these two statements are
equivalent for the 1d fused lasso problem (see the
primal-dual correspondence in Section~\ref{sec:1dprops}),
and therefore the boundary lemma is equivalent to~\eqref{eq:fuselem}.

\subsection{\texorpdfstring{Path algorithm.}{Path algorithm}}
\label{sec:1dpath}
This section is intended to explain the path algorithm from a
conceptual point of view, and no rigorous arguments for its
correctness are made here. We defer these until Section
\ref{sec:dpath}, when we revisit the problem in the context of a
general matrix $D$.

The boundary lemma describes the behavior of the solution as $\lambda$
decreases, and therefore it is natural to construct the solution path
by moving the parameter from $\lambda=\infty$ to $\lambda=0$.
As will be made apparent from the details of the algorithm,
the solution path is a piecewise linear function of~$\lambda$, with a
change in slope occurring whenever one of its coordinate paths hits the
boundary.
The key observation is that, by the boundary lemma, if a coordinate
hits the boundary it will stay on the boundary for the rest of
the path down to $\lambda=0$. Hence, when it hits the boundary we can
essentially eliminate a coordinate from consideration (since we know
its value at each smaller $\lambda$), recompute the slopes of the
other coordinate paths, and move until another coordinate hits the
boundary.

As we construct the path, we maintain two lists:
$\cB=\cB(\lambda)$, which contains the coordinates that are currently
on the boundary; and $s=s(\lambda)$, which contains their signs.
For example, if we have $\cB(\lambda)=(5,2)$ and $s(\lambda)=(-1,1)$,
then this means that $\hu_{\lambda,5}=-\lambda$ and
$\hu_{\lambda,2}=\lambda$.
We call the coordinates in $\cB$ the ``boundary coordinates,'' and the
rest the ``interior coordinates.''
Now we can describe the algorithm:

\begin{algorithm}[(Dual path algorithm for the 1d fused lasso)]
\label{alg:1d}
\smallskipamount=0pt
\begin{itemize}
\item  Start with $\lambda_0=\infty$,
$\cB=\varnothing$  and $s=\varnothing$.
\item  For $k=0,\ldots,  n-2$:
\begin{enumerate}[3.]
\item[1.] Compute the solution at $\lambda_k$ by least squares, as in
\eqref{eq:ls}.
\item[2.] Continuing in a linear direction from the solution, compute
$\lambda_{k+1}$, when an interior coordinate will next hit the
boundary, as in~\eqref{eq:hittime} and~\eqref{eq:nextlam}.
\item[3.] Add this coordinate to $\cB$ and its sign to $s$.
\end{enumerate}
\end{itemize}
\end{algorithm}

The algorithm's details appear slightly more complicated, but
this is only because of notation. If $\cB=(i_1,\ldots,  i_k)$, then we
define for a matrix $A$ and a vector~$x$
\[
A_\cB= \left[
\matrix{
A_{i_1} \cr \vdots\cr A_{i_k}
}
\right]  \quad  \mbox{and} \quad
x_\cB= (x_{i_1},\ldots,  x_{i_k})\T,
\]
where $A_i$ is the $i$th row of $A$. In words: $A_\cB$ indexes
the rows of $A$ that are in $\cB$, and $x_\cB$ indexes the coordinates
of $x$ in $\cB$. We use the subscript $-\cB$, as
in $A_{-\cB}$ or $x_{-\cB}$, to index over all rows or coordinates
except those in $\cB$. Note that $\cB$ as defined above (in the
paragraph preceding the algorithm)
is consistent with our previous definition~\eqref{eq:bset}, except
that here we treat $\cB$ as an ordered list instead
of a set (its ordering only needs to be consistent with that of
$s$). Also, we treat $s$ as a vector when convenient.

When $\lambda=\infty$, the problem is unconstrained, and so clearly
$\cB=\varnothing$ and $s=\varnothing$.
But more generally, suppose that we are at the $k$th
iteration, with boundary set $\cB=\cB(\lambda_k)$
and signs $s=s(\lambda_k)$. By the boundary lemma, the solution
satisfies
\[
\hu_{\lambda,\cB} = \lambda s  \qquad
\mbox{for all }   \lambda\in[0,\lambda_k].
\]
Therefore, for $\lambda\leq\lambda_k$, we can reduce the optimization
problem~\eqref{eq:dual} to
%
\begin{equation}
\label{eq:dualrewrite}
\minimize_{u_{-\cB}}  \halft\|y - \lambda(D_\cB)\T s
- (D_{-\cB})\T u_{-\cB} \|_\ltwo^2  \qquad  \mbox{subject to }
\|u_{-\cB}\|_\linf\leq\lambda,\hspace*{-35pt}
\end{equation}
which involves solving for just the interior coordinates. By
construction, $\hu_{\lambda_k,-\cB}$ lies strictly between $-\lambda_k$
and $\lambda_k$ in every coordinate. Therefore, it is found by
simply minimizing the objective function in~\eqref{eq:dualrewrite},
which gives the least squares estimate
%
\begin{equation}
\label{eq:ls}
\hu_{\lambda_k,-\cB} =  ( D_{-\cB}(D_{-\cB})\T )^{-1}
D_{-\cB}
 \bigl(y-\lambda_k (D_\cB)\T s  \bigr).
\end{equation}
Let $a-\lambda_k b$ denote the right-hand side above.
For $\lambda\leq\lambda_k$, the interior solution will
continue to be $\hu_{\lambda,-\cB}=a-\lambda b$ until one of its
coordinates hits the boundary. This critical value is determined by
solving, for each $i$, the equation $a_i-\lambda b_i = \pm\lambda$;
a~simple calculation shows that the solution is
%
\begin{equation}
\label{eq:hittime}
t_i = \frac{a_i}{b_i \pm1} =
\frac{ [ (D_{-\cB}(D_{-\cB})\T )^{-1}D_{-\cB} y ]_i}
{ [ (D_{-\cB}(D_{-\cB})\T )^{-1}D_{-\cB}(D_\cB)\T
s ]_i
\pm1}
\end{equation}
(only one of $+1$ or $-1$ will yield a value $t_i \in
[0,\lambda_k]$), which we call the ``hitting time'' of coordinate
$i$. We take $\lambda_{k+1}$ to be maximum of these hitting times
%
\begin{equation}
\label{eq:nextlam}
\lambda_{k+1} = \max_i   t_i.
\end{equation}
Then we compute
\[
i_{k+1} = \argmax_i   t_i  \quad  \mbox{and}  \quad
s_{k+1} = \sign (\hu_{\lambda_{k+1},i_{k+1}} ),
\]
and append $i_{k+1}$ and $s_{k+1}$ to $\cB$ and $s$,
respectively.

\subsection{\texorpdfstring{Properties of the solution path.}{Properties of the solution path}}
\label{sec:1dprops}
Here, we study some of the path's basic properties. Again we defer any
rigorous arguments until Section~\ref{sec:dprops}, when we consider a
general penalty matrix $D$. Instead, we demonstrate them by way of a
simple example.

Consider Figure~\ref{fig:paths}(a), which shows the coordinate paths
$\hu_{\lambda,i}$ for an example with $n=8$.
Recall that it is natural to interpret the paths from right to
left ($\lambda=\infty$ to $\lambda=0$).
Initially all of the slopes are zero, because when
$\lambda=\infty$ the solution is just the least squares estimate
$(DD\T)^{-1}Dy$, which has no dependence on $\lambda$. When a
coordinate path first hits the boundary (the topmost path, drawn in
red) the slopes of the other
paths change, and they do not change again until another coordinate
hits the boundary (the bottommost path, drawn in green), and so on,
until all coordinates are on the boundary.

%
\begin{figure}

\includegraphics{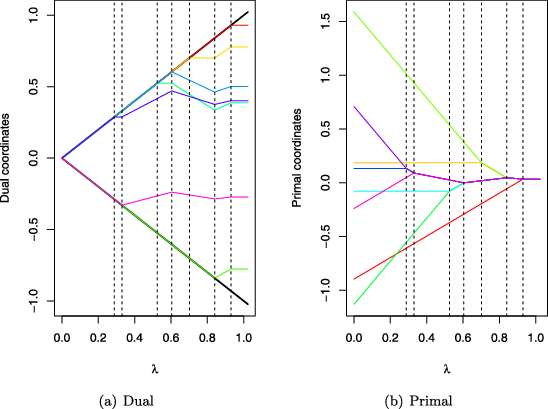}

\caption{(\textup{a}) Dual and (\textup{b}) primal coordinate paths for a small
problem with $n=8$.}
\label{fig:paths}
\end{figure}

The picture suggests that the path $\hu_\lambda$ is continuous and
piecewise linear with respect to $\lambda$, with changes in slope or
``kinks'' at the values $\lambda_1, \ldots, \lambda_{n-1}$ visited by
the algorithm. (Piecewise linearity is obvious from the algorithm's
construction of the path, but continuity is not.)
This is also true in the general $D$ case, although the solution
path can have more than $m$ kinks for an $m\times n$ matrix $D$.

On the other hand, Figure~\ref{fig:paths}(b) shows the
corresponding primal coordinate paths
\[
\hbeta_{\lambda,i} = (y-D\T\hu_\lambda)_i.
\]
As $\hu_\lambda$ is a continuous piecewise linear function of
$\lambda$, so is $\hbeta_\lambda$, again with kinks at $\lambda_1,
\ldots,\lambda_{n-1}$. In contrast to the dual versions, it is natural
to interpret the primal coordinate paths from left to right, because
in this direction the coordinate paths become adjoined, or ``fused,''
at a some value of $\lambda$. The primal picture suggests that these
fusion values are the same as the kinks $\lambda_1,\ldots,
\lambda_{n-1}$, that is:
\begin{itemize}
\item%
\textit{Primal-dual correspondence for the 1d fused lasso.}
The values of $\lambda$ at which two primal coordinates fuse are
exactly the values of $\lambda$ at which a dual coordinate hits the
boundary.
\end{itemize}
A similar property holds for the fused lasso on an arbitrary
graph, although the primal-dual correspondence is a
little more complicated for this case.

Note that as $\lambda$ decreases
in Figure~\ref{fig:paths}(a), no dual coordinate paths leave the
boundary. This is prescribed by the boundary lemma. As $\lambda$
increases in Figure~\ref{fig:paths}(b), no primal two coordinates
split apart, or ``unfuse.'' This is prescribed by a lemma of
\cite{pco} that we paraphrased in~\eqref{eq:fuselem}, and the two
lemmas are equivalent.

\section{\texorpdfstring{A general penalty matrix $D$.}{A general penalty matrix $D$}}
\label{sec:d}

Now we consider~\eqref{eq:dual} for general $m\times n$ matrix
$D$. The first question that comes to mind is: does the boundary lemma
still hold? If $DD\T$ is diagonally dominant, that is
%
\begin{equation}
\label{eq:dd}
(DD\T)_{ii} \geq\sum_{j\not=i} |(DD\T)_{ij}|
 \qquad  \mbox{for }   i=1,\ldots, m,
\end{equation}
then indeed the boundary lemma is still true. (See~\cite{supp}.)
Therefore, the path algorithm for such a $D$ is the same as that
presented in the previous section.

It is easy to check the 1d fused lasso matrix is diagonally dominant,
as both the left- and right-hand sides of the inequality in
\eqref{eq:dd} are equal to 2 when $D=D_\mathrm{1d}$. Unfortunately,
neither the 2d fused lasso matrix nor any of the trend filtering
matrices satisfy condition~\eqref{eq:dd}.
In fact, examples show that the boundary lemma does not
hold for these cases. However, inspired by the 1d fused lasso, we
can develop a similar strategy to compute the full solution path for
an arbitrary matrix $D$. The difference is: in addition to checking
when coordinates will hit the boundary, we have to check when
coordinates will leave the boundary as well.

\subsection{\texorpdfstring{Path algorithm.}{Path algorithm}}
\label{sec:dpath}
Recall that we defined, at a particular
$\lambda_k$, the ``hitting time'' of an interior coordinate path to
the value of $\lambda\leq\lambda_k$ at which this path hits the
boundary. Similarly, let us define the ``leaving time'' of a
boundary coordinate path to be the value of $\lambda\leq\lambda_k$
at which this path leaves the boundary (we will make this idea more
precise shortly). We call the coordinate with the largest
hitting time the ``hitting coordinate,'' and the one with the largest
leaving time the ``leaving coordinate.'' As before, we maintain a
list $\cB$ of boundary coordinates, and $s$ contains their signs.
The algorithm for a general matrix $D$ is:

\begin{algorithm}[(Dual path algorithm for a general $D$)]
\label{alg:d}
\begin{itemize}
\item Start with $k=0$, $\lambda_0=\infty$, $\cB=\varnothing$, and
$s=\varnothing$.
\item While $\lambda_k>0$:
\begin{enumerate}[3.]
\item Compute a solution at $\lambda_k$ by least squares, as in
\eqref{eq:dls}.
\item Compute the next hitting time $h_{k+1}$, as in
\eqref{eq:dhittime} and~\eqref{eq:dnextht}.
\item Compute the next leaving time $l_{k+1}$, as in
\eqref{eq:dpreleave},~\eqref{eq:dleavetime}  and~\eqref{eq:dnextlt}.
\item Set $\lambda_{k+1}=\max\{h_{k+1},l_{k+1}\}$. If
$h_{k+1} > l_{k+1}$, then add the hitting coordinate to $\cB$ and its
sign $s$, otherwise remove the leaving coordinate to $\cB$ and its
sign from $s$. Set $k=k+1$.
\end{enumerate}
\end{itemize}
\end{algorithm}

Although the intuition for this algorithm comes from the
1d fused lasso problem, its details are derived from a more technical
point of view, via the Karush--Kuhn--Tucker (KKT) optimality
conditions. For our problem~\eqref{eq:dual}, the KKT conditions are
%
\begin{equation}
\label{eq:kkt}
(DD\T u)_i - (Dy)_i + \alpha\gamma_i = 0  \qquad  \mbox{for }
i=1,\ldots, m,
\end{equation}
where $u,\alpha,\gamma$ are subject to the constraints
\begin{subequations}
\begin{eqnarray}
\label{eq:a}
\|u\|_\linf&\leq&\lambda,\\
\label{eq:b}
\alpha&\geq&0, \\
\label{eq:c}
\alpha\cdot (\|u\|_\linf-\lambda ) &=& 0,
\\
\label{eq:d}
\|\gamma\|_\lone &\leq& 1,\\\label{eq:e}
\gamma\T u &=& \|u\|_\linf.
\end{eqnarray}
\end{subequations}
\noindent
Constraints~\eqref{eq:d} and~\eqref{eq:e} say that $\gamma$ must be a
subgradient of $\|u\|_\linf$ with respect to $u$.
Subgradients are a generalization of gradients to the case of
nondifferentiable functions---for an overview, see~\cite{bert}.

A necessary and sufficient condition for $u$ to be a solution to
\eqref{eq:dual} is that $u,\alpha,\gamma$ satisfy
\eqref{eq:kkt} and (\ref{eq:a})--(\ref{eq:e}) for some $\alpha$ and $\gamma$.
The basic idea is that hitting times are events in which~\eqref{eq:a}
is violated, and leaving times are events in which (\ref{eq:b})--(\ref{eq:e}) are
violated.
We describe what happens at the $k$th iteration. At
$\lambda=\lambda_k$, the solution is given by
$\hu_{\lambda_k,\cB}=\lambda_k s$ for the boundary coordinates and the
least squares estimate
%
\begin{equation}
\label{eq:dls}
\hu_{\lambda_k,-\cB} =  ( D_{-\cB}(D_{-\cB})\T )^+
D_{-\cB}
 \bigl(y-\lambda_k (D_\cB)\T s  \bigr)
\end{equation}
for the interior coordinates. Here $A^+$ denotes the
(Moore--Penrose) pseudoinverse of a matrix $A$, which is needed as
$D$ may not have full row rank. Write $\hu_{\lambda_k,-\cB} =
a-\lambda_k
b$. Like the 1d fused lasso case, we decrease $\lambda$ and continue
in a linear direction from the interior solution at $\lambda_k$,
proposing $\hu_{\lambda,-\cB}=a-\lambda b$.\vadjust{\goodbreak} We first determine when
a coordinate of $a-\lambda b$ will hit the boundary. The same
calculation as before gives the hitting times
%
\begin{equation}
\label{eq:dhittime}
t^{(\mathrm{hit})}_i = \frac{a_i}{b_i \pm1} =
\frac{ [ (D_{-\cB}(D_{-\cB})\T )^+D_{-\cB} y ]_i}
{ [ (D_{-\cB}(D_{-\cB})\T )^+D_{-\cB}(D_\cB)\T s  ]_i
\pm1}.
\end{equation}
(Only one of $+1$ or $-1$ will yield a value in
$[0,\lambda_k]$.) Hence, the next hitting time is
%
\begin{equation}
\label{eq:dnextht}
h_{k+1} = \max_i   t^{(\mathrm{hit})}_i.
\end{equation}

The new step is to determine when a boundary coordinate will next
leave the boundary. After examining the constraints (\ref{eq:b})--(\ref{eq:d}), we
can express the leaving time of the $i$th boundary coordinate by first
defining
%
\begin{eqnarray}
\label{eq:dpreleave}
c_i &=& s_i \cdot \bigl[ D_\cB [I-(D_{-\cB})\T
 (D_{-\cB}(D_{-\cB})\T )^+ D_{-\cB} ]y  \bigr]_i,
\nonumber
\\[-8pt]
\\[-8pt]
d_i &=& s_i \cdot \bigl[ D_\cB [I-(D_{-\cB})\T
 (D_{-\cB}(D_{-\cB})\T )^+ D_{-\cB} ] (D_\cB)\T s
 \bigr]_i,
\nonumber
\end{eqnarray}
and then the leaving time is
%
\begin{equation}
\label{eq:dleavetime}
t^{(\mathrm{leave})}_i =
\cases{\displaystyle
c_i/d_i ,&\quad  if    $c_i<0$    and   $d_i<0 $,\cr\displaystyle
0 ,&\quad  otherwise.
}
\end{equation}
Therefore, the next leaving time is
%
\begin{equation}
\label{eq:dnextlt}
l_{k+1} = \max_i   t^{(\mathrm{leave})}_i.
\end{equation}

The last step of the iteration moves until the next critical
event---hitting time or leaving time, whichever happens first.
We can verify that the path visited by the
algorithm satisfies the KKT conditions~\eqref{eq:kkt} and
(\ref{eq:a})--(\ref{eq:e}) at each $\lambda$, and hence is indeed a
solution path of the dual problem~\eqref{eq:dual}.
This argument, as well a derivation of the leaving
times given in~\eqref{eq:dpreleave} and~\eqref{eq:dleavetime}, can be
found in~\cite{supp}.

\subsection{\texorpdfstring{Properties of the solution path.}{Properties of the solution path}}
\label{sec:dprops}
Suppose that the algorithm terminates after $T$ iterations.
By construction, the returned solution path $\hu_\lambda$
is piecewise linear with respect to $\lambda$, with kinks at
$\lambda_1,\ldots,\lambda_T$. Continuity, on the other hand, is
a little more subtle: because of the specific choice of the
pseudoinverse solution in~\eqref{eq:dls}, the path
$\hu_\lambda$ is also continuous over $\lambda$.
[When $A$ does not have full column rank, there are many minimizers of
$\|z-Ax\|_\ltwo$, and $x=(A\T A)^+ A\T z$ is only one of them.] The
proof of continuity appears in~\cite{supp}.

Since the primal solution path $\hbeta_\lambda$ can be
recovered from $\hu_\lambda$ by the linear transformation
\eqref{eq:primaldual}, the path $\hbeta_\lambda$ is also continuous
and piecewise linear in~$\lambda$. The kinks in this path are
necessarily a subset of $\{\lambda_1,\ldots,\lambda_T\}$.
However, this could be a strict inclusion as $\operatorname{rank}(D)$
could be ${<}m$, that is, $D\T$ could have a nontrivial null space.
So when does the primal solution path change slope? To answer
this question, it helps to write the solutions in a more explicit
form.\vadjust{\goodbreak}

For any given $\lambda$, let $\cB=\cB(\lambda)$ and $s=s(\lambda)$ be
the current boundary coordinates and their signs. Then we know that
the dual solution can be written as
\begin{eqnarray*}
\hu_{\lambda,\cB} &=& \lambda s ,\\
\hu_{\lambda,-\cB} &= & ( D_{-\cB}(D_{-\cB})\T )^+
D_{-\cB} \bigl(y-\lambda(D_\cB)\T s  \bigr).
\end{eqnarray*}
This means that the dual fit $D\T\hu_\lambda$ is just
%
\begin{eqnarray}
\label{eq:dualfit}
D\T\hu_\lambda&=& (D_\cB)\T\hu_{\lambda,\cB} +
(D_{-\cB})\T\hu_{\lambda,-\cB}\nonumber
\\[-8pt]
\\[-8pt] &=&
\lambda(D_\cB)\T s +
P_{\operatorname{row}(D_{-\cB})} \bigl(y-\lambda(D_\cB)\T s  \bigr),
\nonumber
\end{eqnarray}
where $P_M$ denotes the projection operator onto a linear
subspace $M$ (here the row space of $D_{-\cB}$). Therefore, applying
\eqref{eq:primaldual}, the primal solution is given by
%
\begin{equation}
\label{eq:dlassosol2}
\hbeta_\lambda=  \bigl(I -
P_{\operatorname{row}(D_{-\cB})} \bigr)\bigl (y-\lambda(D_\cB)\T s  \bigr)
=
P_{\operatorname{null}(D_{-\cB})}  \bigl(y-\lambda(D_\cB)\T s  \bigr).
\end{equation}

Equation~\eqref{eq:dlassosol2} is useful for several
reasons. Later, in Section~\ref{sec:df}, we use it along with a
geometric argument to prove a result on the degrees of freedom of
$\hbeta_\lambda$.
But first, equation~\eqref{eq:dlassosol2} can be used to answer our
immediate question about the primal path's changes in slope: it turns
out that $\hbeta_\lambda$ changes slope at $\lambda_{k+1}$ if
$\operatorname{null}(D_{-\cB(\lambda_k)}) \not=
\operatorname{null}(D_{-\cB(\lambda_{k+1})})$, that is, the null
space of $D_{-\cB}$ changes from iterations $k$ to $k+1$. (The proof
is left to~\cite{supp}.) Thus
we have achieved a generalization of the primal-dual correspondence
of Section~\ref{sec:1dprops}:
\begin{itemize}
\item\textit{Primal-dual correspondence for a general $D$.}
The values of $\lambda$ at which at which the primal coordinates
changes slope are the values of $\lambda$ at which the null space of
$D_{-\cB(\lambda)}$ changes.
\end{itemize}

For various applications, the null space of $D_{-\cB}$ can have a nice
interpretation. We present the case for the fused lasso on an
arbitrary graph $\cG$, with $m$ edges and $n$ nodes. We assume without
a loss of generality that $\cG$ is connected (otherwise the problem
decouples into smaller fused lasso problems).
Recall that in this setting each row of $D$ gives the difference
between two nodes connected by an edge. Hence, the null space of $D$ is
spanned by the vector of all ones
\[
\mathbh{1} = (1,1,\ldots,1)\T\in\R^n.
\]
Furthermore, removing a subset of the rows, as in $D_{-\cB}$, is like
removing the corresponding subset of edges, yielding a subgraph
$\cG_{-\cB}$.
It is not hard to see that the dimension of the null space of
$D_{-\cB}$ is equal to the number of connected components in
$\cG_{-\cB}$. In fact, if $\cG_{-\cB}$ has connected components
$A_1,\ldots, A_k$, then the null space of $D_{-\cB}$ is spanned by
$\mathbh{1}_{A_1},\ldots,\mathbh{1}_{A_k} \in\R^m$, the indicator
vectors on these components, that is,
\[
(\mathbh{1}_{A_i})_j = 1(\mbox{node } j \in A_i)  \qquad  \mbox{for }
  j=1,\ldots, n.
\]

When $\cG_{-\cB}$ has connected components $A_1,\ldots, A_k$,
the projection $P_{\operatorname{null}(D_{-\cB})}$ performs a coordinate-wise
average within each group $A_i$:
\[
P_{\operatorname{null}(D_{-\cB})}(x) = \sum_{i=1}^k
\biggl (\frac{(\mathbh{1}_{A_i}) \T x}{|A_i|} \biggr) \cdot
\mathbh{1}_{A_i}.
\]
Therefore, recalling~\eqref{eq:dlassosol2}, we see that coordinates of
the primal solution $\hbeta_\lambda$ are constant (or in other words,
fused) on each group $A_i$.

As $\lambda$ decreases, the boundary set $\cB$ can both
grow and shrink in size; this corresponds to adding an edge to and
removing an edge from the graph $\cG_{-\cB}$, respectively. Since the
null space of $D_{-\cB}$ can only change when $\cG_{-\cB}$
undergoes a change in connectivity, the general primal-dual
correspondence stated above becomes:
\begin{itemize}
\item\textit{Primal-dual correspondence for the fused lasso on a graph.}
In two parts:
\begin{longlist}[(ii)]
\item[(i)] the values of $\lambda$ at which two primal coordinate
groups fuse are the values of $\lambda$ at which a dual
coordinate hits the boundary and disconnects the graph
$\cG_{-\cB(\lambda)}$;
\item[(ii)] the values of $\lambda$ at which two primal coordinate
groups unfuse are the values of $\lambda$ at which a dual
coordinate leaves the boundary and reconnects the graph
$\cG_{-\cB(\lambda)}$.
\end{longlist}
\end{itemize}

Figure~\ref{fig:Dpaths} illustrates this correspondence for a
graph with $n=6$ nodes and $m=9$ edges. Note that the
primal-dual correspondence for the fused lasso on a graph, as stated
above, is consistent with that given in Section
\ref{sec:1dprops}. This is because the 1d fused lasso corresponds to a
chain graph, so removing an edge always disconnects the
graph, and furthermore, no dual coordinates ever leave the boundary by
the boundary lemma.

%
\begin{figure}[t!]

\includegraphics{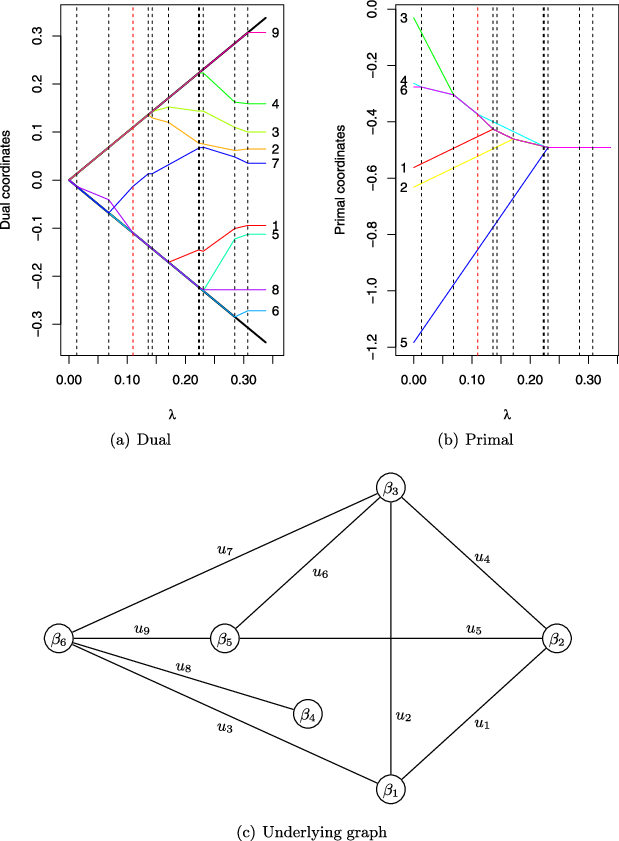}

\caption{(\textup{a}) Dual and (\textup{b}) primal coordinate paths for the
fused lasso applied to the  graph structure shown in (\textup{c}). As $\lambda$
decreases, the first dual coordinate to hit the boundary is
$u_9$, but removing the corresponding edge does not disconnect the
graph, so nothing happens in the primal setting. Then $u_6$ hits the
boundary, and again, removing its edge does not affect the graph's
connectivity, so nothing happens. But when $u_5$ hits the boundary
next, removing its edge disconnects the graph (the node marked
$\beta_5$ becomes its own connected component), and hence two primal
coordinate paths fuse. Note that $u_8$ leaves the boundary at some
point (the red dashed vertical line). Adding its edge reconnects the
graph, and therefore two primal coordinates unfuse.}
\label{fig:Dpaths}
\end{figure}

\section{\texorpdfstring{A general design matrix $X$.}{A general design matrix $X$}}
\label{sec:x}

In the last two sections, we focused on the signal approximation case
$X=I$. In this section, we consider the problem~\eqref{eq:dlasso} when
$X$ is a general $n\times p$ matrix of covariates (and $D$ is a
general $m \times p$ penalty matrix). Our strategy is to again solve
the equivalent dual problem~\eqref{eq:xdual}. At first glance, this
problem looks much more difficult than the dual~\eqref{eq:dual} when
$X=I$. Moreover, the relationship between the primal and dual
solutions is now
%
\begin{equation}
\label{eq:xprimaldual}
\hbeta_\lambda= (X\T X)^+ (X\T y - D\T\hu_\lambda),
\end{equation}
which is also more complicated.

However, suppose that we define $\ty=XX^+ y$ and $\tD=DX^+$, where the
pseudoinverse of the (rectangular) matrix $X$ is $X^+=(X\T
X)^+X\T$. Abbreviating $P=P_{\mathrm{col}(X)}=XX^+$, the objective
function in~\eqref{eq:xdual} becomes
\begin{eqnarray*}
(X\T y-D\T u)\T(X\T X)^+ (X\T y - D\T u)
&=& y\T Py - 2y\T\tD\T u + u\T\tD\tD\T u \\
&=& (y-\tD\T u)\T P (y -\tD\T u) \\
&=& (y-\tD\T u)\T P^2 (y -\tD\T u) \\
&=& (\ty- \tD\T u)\T(\ty- \tD\T u).
\end{eqnarray*}
The first equality above is by the definition of $D$; the second holds
because $P\tD\T=\tD\T$; the third is because $P$ is idempotent; and
the fourth is again due to the identity $P\tD\T=\tD\T$. Therefore we
can rewrite the dual problem~\eqref{eq:xdual} in terms of our
transformed data and penalty matrix:
%
\begin{eqnarray}
\label{eq:xdual2}
\minimize_{u \in\R^m}  \half\|\ty-\tD\T u\|_\ltwo^2
\nonumber
\\[-8pt]
\\[-8pt]
\eqntext{\mbox{subject to }
 \|u\|_\linf\leq\lambda, \  D\T u \in\operatorname{row}(X).}
\end{eqnarray}
It is also helpful to rewrite the relationship~\eqref{eq:xprimaldual}
in terms of our new variables:
%
\begin{equation}
\label{eq:xprimaldual2}
\hbeta_\lambda= X^+(\ty- \tD\T\hu_\lambda),
\end{equation}
which implies that the fit is simply
%
\begin{equation}
\label{eq:xprimaldual3}
X\hbeta_\lambda= \ty- \tD\T\hu_\lambda.
\end{equation}

Modulo the row space constraint, $D\T u \in\operatorname{row}(X)$,
problem~\eqref{eq:xdual2} has exactly the same form as the dual
\eqref{eq:dual} studied in Section~\ref{sec:d}.
In the case that $X$ has full column rank, this extra constraint has
no effect, so we can treat the problem just as before. We discuss this
next.

\subsection{\texorpdfstring{The case $\operatorname{rank}(X)=p$.}{The case $\operatorname{rank}(X)=p$}}
\label{sec:xfull}

Suppose that $\operatorname{rank}(X)=p$, so
$\operatorname{row}(X)=\R^p$ (note that this necessarily means
$p \leq n$). Then the constraint $D\T u \in\operatorname{row}(X)$ is
trivially satisfied for any $u$, and problem~\eqref{eq:xdual2} is
the same as problem~\eqref{eq:dual} that we solved in
Section~\ref{sec:d}, except with $y,D$ replaced by $\ty,\tD$,
respectively. Therefore, we can apply Algorithm~\ref{alg:d} to find a
dual solution path $\hu_\lambda$, which gives the primal solution path
using~\eqref{eq:xprimaldual2}, or the fit using
\eqref{eq:xprimaldual3}.

Fortunately, all of the properties in Section~\ref{sec:dprops}
apply to the current setting as well.
First, we know that the constructed dual path $\hu_\lambda$ is
continuous and piecewise linear, because we are using the same
algorithm as before. This means that $\hbeta_\lambda$ is
also continuous and piecewise linear, since it is given by the linear
transformation~\eqref{eq:xprimaldual2}. Next, we can
follow the same logic in writing out the dual fit $\tD\T\hu_\lambda$
to conclude that
%
\begin{equation}
\label{eq:dlassosol3}
\hbeta_\lambda= X^+ P_{\operatorname{null}(\tD_{-\cB})}
 \bigl(\ty- \lambda(\tD_{-\cB})\T s \bigr)
\end{equation}
or
%
\begin{equation}
\label{eq:dlassosol4}
X\hbeta_\lambda= P_{\operatorname{null}(\tD_{-\cB})}
 \bigl(\ty- \lambda(\tD_{-\cB})\T s \bigr).
\end{equation}
Hence, $0=\tD_{-\cB}X\hbeta_\lambda=D_{-\cB}\hbeta_\lambda$, which
means that $\hbeta_\lambda\in\operatorname{null}(D_{-\cB})$, as before.

Though working with equations~\eqref{eq:dlassosol3} and
\eqref{eq:dlassosol4} may seem complicated (as one would need to
expand the newly defined variables $\ty,\tD$ in terms of $y,D$), it is
straightforward to show that the general primal-dual correspondence
still holds here. This is given in~\cite{supp}.
That is: the primal path $\hbeta_\lambda$ changes
slope at the values of $\lambda$ at which the null space of
$D_{-\cB(\lambda)}$ changes.
For the fused lasso on a graph $\cG$, we indeed still get
fused groups of coordinates in the primal solution, since
$\hbeta_\lambda\in\operatorname{null}(D_{-\cB})$ implies that
$\hbeta_\lambda$ is fused on the connected components of
$\cG_{-\cB}$. Therefore, fusions still correspond to dual coordinates
hitting the boundary and disconnecting the graph, and unfusions still
correspond to dual coordinates leaving the boundary and reconnecting
the graph.

\subsection{\texorpdfstring{The case $\operatorname{rank}(X)<p$.}{The case $\operatorname{rank}(X)<p$}}

If $\operatorname{rank}(X)<p$, then $\operatorname{row}(X)$ is a strict subspace
of $\R^p$. One easy way to avoid dealing with the constraint
$D\T u \in\operatorname{row}(X)$ of~\eqref{eq:xdual2} is to add an $\ell_2$
penalty to our original problem. That is, we consider for a fixed
$\epsilon>0$
%
\begin{equation}
\label{eq:dlassoridge}
\minimize_{\beta\in\R^p}   \halft\|y-X\beta\|_\ltwo^2 +
\lambda\|D\beta\|_\lone+ \epsilon\|\beta\|_\ltwo^2,
\end{equation}
which is the same as
\[
\minimize_\beta  \halft\|y^*-(X^*)\beta\|_\ltwo^2 +
\lambda\|D\beta\|_\lone,
\]
where $y^*=(y,0)\T$ and
$X^*=\left[
 { X \atop \epsilon\cdot I}
\right]$.
Since $\operatorname{rank}(X^*)=p$, we can use\vspace*{1pt} the
strategy discussed in the last section, which is
just applying Algorithm~\ref{alg:d} to a transformed problem, to find
the solution path of~\eqref{eq:dlassoridge}.
Putting aside computational concerns,
it may still be preferable to study problem
\eqref{eq:dlassoridge} instead of problem~\eqref{eq:dlasso}.
Some reasons are:
\begin{itemize}
\item as $\operatorname{rank}(X)<p$, the problem~\eqref{eq:dlasso}
is no longer strictly convex and may not have a unique
solution; this complicates the idea of a solution path, which can now
be discontinuous with respect to $\lambda$
(see~\cite{holger} for a related example in the fused lasso case);
\item the solution of~\eqref{eq:dlassoridge} may actually outperform
that of~\eqref{eq:dlasso} in terms prediction error, analogous to
the advantage of the \textit{elastic net} over the lasso
(see~\cite{enet}).
\end{itemize}

Though adding an $\ell_2$ penalty is easier and, as we suggested,
perhaps even desirable, we can still solve the unmodified problem
\eqref{eq:dlasso} in the $\operatorname{rank}(X)<p$ case, by looking at its
dual~\eqref{eq:xdual2}. We only give a rough sketch of the path
algorithm because in the present setting the solution and its
computation are more complicated.

We can rewrite the row space constraint in~\eqref{eq:xdual2} as $D\T u
\perp\operatorname{null}(X)$. Using the SVD of $X$, we can construct an
orthogonal basis for the null space of~$X$. Let $W$ be the matrix that
has these basis elements in its columns. Then problem
\eqref{eq:xdual2} is now
%
\begin{eqnarray}
\label{eq:xdual3}
\minimize_{u \in\R^m}  \halft\|\ty-\tD\T u\|_\ltwo^2
\nonumber
\\[-8.5pt]
\\[-8.5pt]
\eqntext{\mbox{subject to }  \|u\|_\linf\leq\lambda, \  (DW)\T u=0.}
\end{eqnarray}
To find a solution path of~\eqref{eq:xdual3}, the KKT conditions
\eqref{eq:kkt} need to be modified to incorporate the new equality
constraint, becoming
\[
(\tD\tD\T u)_i - (\tD\ty)_i +
\alpha\gamma_i + (DW\delta)_i = 0  \qquad  \mbox{for }
i=1,\ldots, m,
\]
where the variables are $u,\alpha,\gamma,\delta$, subject
to the same constraints as before, (\ref{eq:a})--(\ref{eq:e}), and additionally
$(DW)\T u = 0$.
Instead of simply using the appropriate least squares estimate at each
iteration, we now need to solve for $u$ and $\delta$ together.
When $\lambda=\infty$, this case be done by solving the
block system
%
\begin{equation}
\label{eq:block}
\left[
\matrix{\displaystyle
\tD\tD\T& DW \cr\displaystyle
(DW)\T& 0
}
\right]
\left[
\matrix{\displaystyle
u \cr\displaystyle  \delta
}
\right]
=
\left[
\matrix{\displaystyle
\tD\ty\cr\displaystyle  0
}
\right],
\end{equation}
and in future iterations the expressions are similar.
Having done this, satisfying the rest of the
constraints (\ref{eq:a})--(\ref{eq:e})
can be done by finding the hitting and
leaving times just as we did previously.


\section{\texorpdfstring{Computational considerations.}{Computational considerations}}
\label{sec:comp}

We discuss an efficient implementation of Algorithm~\ref{alg:d},
which gives the solution path of the signal approximation problem
\eqref{eq:dlassoi}, after applying the transformation
\eqref{eq:primaldual} from dual to primal variables.
For a design with $\operatorname{rank}(X)=p$, we can modify
$y$ and $X$, and then the same algorithm gives the solution path of
\eqref{eq:dlasso}, this time relying on the transformation
\eqref{eq:xprimaldual2} for the primal path.

At each iteration of the algorithm, the dominant work is in
computing expressions of the form
\[ (D_{-\cB}(D_{-\cB})\T )^+
D_{-\cB}x
\]
for
some vector $x$, where $\cB$ is the current boundary
set [see equations~\eqref{eq:dhittime} and~\eqref{eq:dpreleave}].
Equivalently, the complexity of each iteration is based on finding
%
\begin{equation}
\label{eq:form}
\argmin_v  \Bigl\{ \|v\|_\ltwo \dvtx   v = \argmin_w
\|x-(D_{-\cB})\T w\|_\ltwo \Bigr\},
\end{equation}
the least squares solution with the smallest $\ell_2$ norm.
In the next iteration, $D_{-\cB}$ has either one less or one more row
(depending on whether a coordinate hit or left the boundary).

We can exploit the fact that the problems~\eqref{eq:form} are highly
related from one iteration to the next (our strategy that is similar
to that in the LARS implementation).
Suppose that when $\cB=\varnothing$, we solve the problem
\eqref{eq:form} by using a matrix factorization
(e.g., a QR decomposition). In future
iterations, this factorization can be efficiently updated after a row
has been deleted from or added to $D_{-\cB}$. This allows us to
compute the new solution of~\eqref{eq:form} with much less work than
it would take to solve the problem from ``scratch.''

Recall that $D$ is $m\times n$, and the dual variable $u$ is
$m$-dimensional. Let $T$ denote the number of iterations taken by the
algorithm (note that $T \geq m$, and can be strictly greater if dual
coordinates leave the boundary). When $m \leq n$, we can use
a QR factorization of $D\T$ to compute the full dual solution path in
\[
O(mn^2 + Tm^2)
\]
operations. When $m>n$, using a QR factorization of $D$ allows us to
compute the full dual solution path in
\[
O(m^2 n + Tn^2)
\]
operations. The main idea behind this implementation is fairly
straightforward. However, the details become somewhat complicated
because we require the minimum $\ell_2$ norm solution
\eqref{eq:form}, instead of a generic solution, to the least squares
problem at each iteration. See Chapters 5 and 12 of~\cite{golub} for
an extensive coverage of the QR decomposition.

We mention two simple points to improve practical efficiency:
\begin{itemize}
\item The algorithm starts at the fully regularized end of the path
($\lambda=\infty$) and works toward the unregularized solution
($\lambda=0$). Therefore, for problems in which the highly
or moderately regularized solutions are the only ones of interest,
the algorithm can compute part of the path and terminate
early. This could end up being a large savings in practice.

\item One can obtain an approximate solution path by not permitting
dual coordinates to leave the boundary (achieved by setting
$l_{k+1}=0$ in Step 3 of Algorithm~\ref{alg:d}). This makes $T=m$,
and so computing this approximate path only requires $O(mn^2)$ or
$O(m^2n)$ operations when $m \leq n$ or $m > n$, respectively. This
approximation can be quite accurate if the number times a dual
coordinate leaves the boundary is (relatively) small. Furthermore,
its legitimacy is supported by the following fact: for $D=I$, this
approximate path is exactly the LARS path when LARS is run it its
original (unmodified) state. We discuss this in the next section.
\end{itemize}

Finally, it is important to note that if one's goal is to find the
solution of~\eqref{eq:dlassoi} or~\eqref{eq:dlasso} over a discrete
set of $\lambda$ values, and the problem size is very large, then it
is likely that our path algorithm is not the most efficient
approach. The reason here is the same reason that LARS is not
generally used to solve large-scale lasso problems:
the set of critical points (changes in slope) in the piecewise
linear solution path $\hbeta_\lambda$ becomes very dense as the
problem size increases. For solving\vadjust{\goodbreak} a large problem at a fixed
$\lambda$, it is preferable to use a convex optimization technique
that was specifically developed for the purposes of
computational efficiency. First-order methods, for example, can
efficiently solve large-scale instances of~\eqref{eq:dlassoi} or
\eqref{eq:dlasso} for $\lambda$ in a discrete set (see~\cite{nesta} as an example).

Another optimization method of recent interest is coordinate
descent~\cite{cd}, which is quite efficient in solving the
lasso at discrete values of $\lambda$~\cite{pco}, and is
favored for its simplicity.
But coordinate descent cannot be used for the
minimizations~\eqref{eq:dlassoi} and~\eqref{eq:dlasso}, because the
penalty term $\|D\beta\|_\lone$ is not separable in $\beta$, and
therefore coordinate descent does not necessarily converge.
In the important signal approximation case~\eqref{eq:dlassoi},
however, the dual problem~\eqref{eq:dual} is separable, so coordinate
descent will converge if applied to the dual.
Furthermore, for various applications,
the matrix $D$ is sparse and structured, which means that the
coordinate-wise updates for~\eqref{eq:dual} are very fast.
This makes coordinate descent on the dual a promising method
for solving many of the signal approximation problems from Section~\ref{sec:apps}.

\section{\texorpdfstring{Connection to LARS.}{Connection to LARS}}
\label{sec:lars}

In this section, we return to the LARS algorithm, described in
the \hyperref[intro]{Introduction} as a point of motivation for our work. We assume
that $\operatorname{rank}(X)=p$ and $D=I$, so that~\eqref{eq:dlasso} is just
the standard lasso problem. Our algorithm gives the lasso path
$\hbeta_\lambda$, via the dual path $\hu_\lambda$; another way of
finding the lasso path is to use the LARS algorithm in its ``lasso''
mode. Since the problem is strictly convex ($X$ has full column rank),
there is only one solution at each $\lambda$, so of course these two
algorithms must give the same result.

In its original or unmodified state, LARS returns a different
path, obtained by selecting variables in order continuously decrease
the maximal absolute correlation with the residual. We refer to this
as the ``LARS path.'' Interestingly, the LARS path can be
viewed as an approximation to the lasso path (see~\cite{lars} for an
elegant interpretation and discussion of this). In our framework, we
can obtain an approximate dual solution path if we never check for
dual coordinates leaving the boundary, which can be achieved by
dropping Step 3 from Algorithm~\ref{alg:d} (or more precisely, by
setting $l_{k+1}=0$ for each $k$). If we denote the resulting dual
path by $\tu_\lambda$, then this suggests a primal path
%
\begin{equation}
\label{eq:tbeta}
\tbeta_\lambda=(X\T X)^{-1}(X\T y - \tu_\lambda),
\end{equation}
based on the transformation in~\eqref{eq:xprimaldual}. The question
is: how does this approximate solution path $\tbeta_\lambda$ compare
to the LARS path?

Figure~\ref{fig:lar} shows the two paths in question. On the left is
the familiar plot of~\cite{lars}, showing the LARS path
for the ``diabetes data.'' The colored dots on the $x$-axis mark when
variables enter the model. The right plot shows our approximate
solution path on this same data set, with vertical dashed lines
marking when variables (coordinates) hit the boundary.
The paths look identical, and this is not a coincidence:
we can show that our approximate path, which is given by
ignoring dual coordinates leaving the boundary, is equal to the LARS
path in general.

%
\begin{figure}

\includegraphics{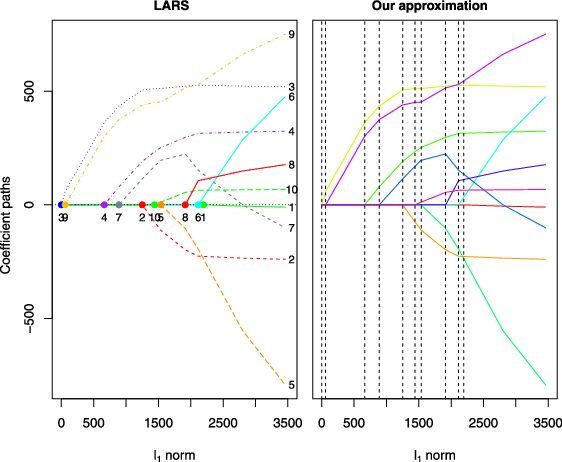}

\caption{Comparing the LARS path and our approximate lasso
path, on the diabetes data. For this data set
$n=442$ and $p=10$. The paths by parametrized by the $\ell_1$ norm
of their (respective) coefficient vectors, because the LARS path is
not naturally parametrized by $\lambda$.}
\label{fig:lar}
\end{figure}

\begin{lemma}[(Equivalence to LARS)]
\label{lemma:lars}
Suppose that $\operatorname{rank}(X)=p$ and consider using Algorithm
\ref{alg:d} to compute an approximate lasso path in the following way:
we use $\ty=XX^+y$,   $\tD=X^+$ in place of $y,D$, and we ignore Step 3
(i.e., set $l_{k+1}=0$). Let $\tu_\lambda$ denote the
corresponding dual path, and define a primal path $\tbeta_\lambda$
according to~\eqref{eq:tbeta}. Then $\tbeta_\lambda$ is exactly the
LARS path.
\end{lemma}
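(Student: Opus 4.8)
The plan is to show that, on every linear segment, the approximate dual path pushed through \eqref{eq:tbeta} reproduces exactly the coefficient vector that LARS maintains, and that both procedures admit variables at the same values of $\lambda$ and with the same signs; I would argue by induction on the breakpoints $\lambda_0=\infty>\lambda_1>\lambda_2>\cdots$ of the approximate path. First I would record the correlational meaning of the dual variable: rearranging \eqref{eq:tbeta} (with $b=0$, since $\rank(X)=p$) gives $\tu_\lambda = X\T(y-X\tbeta_\lambda)$, so each dual coordinate $\tu_{\lambda,i}$ is the inner product of the column $X_i$ with the current residual. Hence the boundary set $\cB$ is precisely the set of variables attaining the maximal absolute correlation $\lambda$, while the interior coordinates are the variables whose absolute correlation is strictly below $\lambda$---exactly the active/inactive dichotomy of LARS.

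Next I would pin down the segment solution. As established in Section \ref{sec:xfull}, on a segment with boundary set $\cB$ and signs $s$ the primal path satisfies $\tbeta_\lambda \in \nul(D_{-\cB})$; since $D=I$ this says simply $\tbeta_{\lambda,-\cB}=0$. Feeding this, together with the boundary pinning $\tu_{\lambda,\cB}=\lambda s$, into the identity $\tu_\lambda=X\T(y-X\tbeta_\lambda)$ yields $X_\cB\T X_\cB\,\tbeta_{\lambda,\cB}=X_\cB\T y-\lambda s$, so that
\[
\tbeta_{\lambda,\cB}=(X_\cB\T X_\cB)^{-1}(X_\cB\T y-\lambda s), \qquad \tbeta_{\lambda,-\cB}=0.
\]
This is exactly the coefficient vector that LARS carries on a segment with active set $\cB$ and sign pattern $s$, once the LARS ``current correlation'' is identified with $\lambda$.

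Finally I would match the transitions. Because Step 3 is dropped, a variable is never removed from $\cB$, which is precisely the behavior of LARS in its original (non-lasso) state. The next breakpoint is the largest $\lambda<\lambda_k$ at which an interior coordinate has $|\tu_{\lambda,i}|=\lambda$; by the correlational identity this is the first $\lambda$ at which an inactive variable ties the active absolute correlation, i.e. the LARS variable-entry event, and the recorded sign $\sign(\tu_{\lambda_{k+1},i_{k+1}})$ agrees with the sign of that variable's correlation. The base case is immediate, since both paths start at $\tbeta=0$ with $\cB=\cA=\emptyset$ and admit $\argmax_i|X_i\T y|$ first, at $\lambda_1=\max_i|X_i\T y|$; the induction then closes.

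I expect the main obstacle to be the bookkeeping that reconciles the two parametrizations. LARS is classically described through the equiangular direction proportional to $(X_\cB\T X_\cB)^{-1}s$ together with an arclength step $\gamma$ and a $\min^{+}$ rule for the next knot, whereas our path is parametrized directly by $\lambda$. The crux is to verify that differentiating the segment formula gives $d\tbeta_{\lambda,\cB}/d\lambda=-(X_\cB\T X_\cB)^{-1}s$---the equiangular direction up to the known normalization---and that the hitting-time maximization \eqref{eq:dnextht} reproduces the LARS next-knot computation exactly. Everything else follows routinely from the primal-dual dictionary already in place.
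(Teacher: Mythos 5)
Your proposal is correct and follows essentially the same route as the paper's proof: the pivotal identity $\tu_\lambda = X\T(y - X\tbeta_\lambda)$ identifying dual coordinates with correlations, the observation that dropping Step 3 makes $\cB$ grow monotonically like the LARS active set, and the appeal to the arguments of Sections \ref{sec:dprops} and \ref{sec:xfull} (valid because $\tu_\lambda$ still solves the same sequence of least squares problems) to conclude $\tbeta_{\lambda,-\cB}=0$. Your explicit segment formula $\tbeta_{\lambda,\cB}=(X_\cB\T X_\cB)^{-1}(X_\cB\T y-\lambda s)$ and the verification that its $\lambda$-derivative is the equiangular direction merely make explicit what the paper states descriptively, so this is a more detailed write-up of the same argument rather than a different one.
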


\begin{pf}
First, define the residual $r_\lambda=y-X\tbeta_\lambda$.
Notice that by rearranging~\eqref{eq:tbeta}, we get
$\tu_\lambda= X\T r_\lambda$.
Therefore, the coordinates of the dual path are equal to the inner
products of the columns of $X$ with the current residual. This is the
same as the correlations of the columns with the current residual,
provided we center and scale $X$ appropriately. Hence, we have a
procedure that:
\begin{itemize}
\item moves in a direction so that the absolute correlation with the
current residual is constant within $\cB$ (and maximal among all
variables) for all $\lambda$;
\item adds variables to $\cB$ once their absolute correlation with the
residual matches that realized in $\cB$.\vadjust{\goodbreak}
\end{itemize}
This almost proves that $\tbeta_\lambda$ is the LARS path, with $\cB$
being the ``active set'' in LARS terminology. What remains to be shown
is that the variables not in $\cB$ are all assigned zero coefficients.
But, recalling that $D=I$, the same arguments given in Section
\ref{sec:dprops} and Section~\ref{sec:xfull} apply here to give that
$\tbeta_\lambda\in\operatorname{null}(I_{-\cB})$ (really, $\tu_\lambda$
still solves a sequence of least squares problems, and the only
difference between $\tu_\lambda$ and $\hu_\lambda$ is in how we
construct $\cB$). This means that $\tbeta_{\lambda,-\cB}=0$, as
desired.
\end{pf}

\section{\texorpdfstring{Degrees of freedom.}{Degrees of freedom}}
\label{sec:df}

In general, the concept of degrees of freedom is of great
interest. It describes the effective number of parameters used by a
fitting procedure. This is usually easy to compute for linear
procedures (linear in the data $y$) but difficult for
nonlinear, adaptive procedures.
In this section, we derive the degrees of freedom of the fit
of problem~\eqref{eq:dlasso}, when
$\operatorname{rank}(X)=p$ and $D$ is an arbitrary penalty matrix.
This produces corollaries on degrees of freedom for various
problems presented in Section~\ref{sec:apps}. We then briefly discuss
model selection using these degrees of freedom results, and last we
discuss the role of shrinkage, a fundamental property of $\ell_1$
regularization.

\subsection{\texorpdfstring{Degrees of freedom results.}{Degrees of freedom results}}

We assume that the data $y$ is drawn from the normal model
\[
y \sim N(\mu,\sigma^2 I),
\]
and the design matrix $X$ is fixed (nonrandom). For a function $g\dvtx
\R^n \rightarrow\R^n$, with $i$th coordinate function $g_i\dvtx  \R^n
\rightarrow\R$, the degrees of freedom
of $g$ is defined as
\[
\df(g) = \frac{1}{\sigma^2} \sum_{i=1}^n
\operatorname{Cov} (g_i(y),y_i ).
\]
For our problem, the function of interest is
$g(y)=X\hbeta_\lambda(y)$, for fixed $\lambda$.


An alternative and convenient formula for
degrees of freedom comes from Stein's unbiased risk estimate
\cite{stein}. If $g$ is continuous and almost differentiable,
then Stein's formula states that
%
\begin{equation}
\label{eq:stein}
\frac{1}{\sigma^2} \sum_{i=1}^n
\operatorname{Cov} (g_i(y),y_i ) =
\E[(\nabla\cdot g)(y)].
\end{equation}
Here $\nabla \cdot g = \sum_{i=1}^n
\partial g_i/\partial y_i$
is called the divergence of $\theta$. This is useful because
typically the right-hand side of~\eqref{eq:stein} is easier to
calculate; for our problem this is the case. But using Stein's formula
requires checking that the function is continuous and almost
differentiable.
In addition to checking these regularity conditions for
$g(y)=X\hbeta_\lambda(y)$, we establish below that for almost every
$y$ the fit $X\hbeta_\lambda(y)$ is a locally affine projection.
Essentially, this allows us to take the divergence in
\eqref{eq:dlassosol2} when $X=I$, or~\eqref{eq:dlassosol4} for the
general $X$ case, and treat $\cB$ and $s$ as constants.\vadjust{\goodbreak}

As in our development of the path algorithm in Sections~\ref{sec:1d},
\ref{sec:d}  and~\ref{sec:x}, we first consider the case $X=I$,
because it is easier to understand. Notice that we can express the
dual fit as $D\T \hu_\lambda (y) = P_{C_\lambda}(y)$, the projection
of $y$ onto the convex polytope
$C_\lambda = \{D\T u\dvtx \|u\|_\linf \leq \lambda\} \subseteq \R^n$. From
\eqref{eq:primaldual}, the primal solution is just
the residual from this projection, $\hbeta_\lambda(y) =
(I-P_{C_\lambda})(y)$. The projection map onto a convex set is always
a contraction, and in fact, so is the residual from projecting\vspace*{1pt} onto a
convex set (e.g., see the proof of Theorem 1.2.2 in
\cite{schneider}). Therefore $\hbeta_\lambda(y)$ is a contraction, and
hence both continuous and almost differentiable (this follows from the
standard proof a result called ``Rademacher's theorem;'' e.g.,
see Theorem~2 in Section~3.2 of~\cite{evans}).

\begin{figure}

\includegraphics{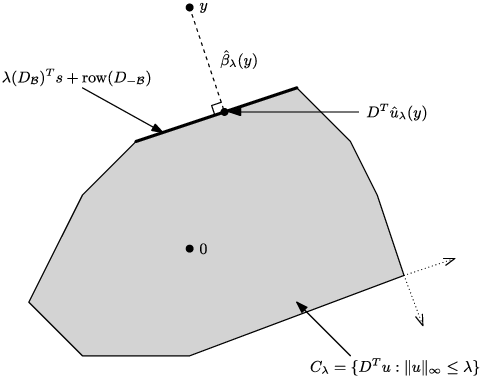}

\caption{An illustration of the geometry surrounding $\hu_\lambda$ and
 $\hbeta_\lambda$, for the case $X=I$. Recall that $\hbeta_\lambda(y)
 =y-D\T\hu_\lambda(y)$, where $D\T\hu_\lambda(y)$ is the projection
 of $y$ onto the convex\vspace*{1pt} polytope $C_\lambda=\{D\T u\dvtx  \|u\|_\linf \leq
 \lambda\}$. Almost everywhere, small pertubations of $y$ do not
 change the face on which its projection lies. The exceptional set
 $\cN_\lambda$ of points for which this property does not hold has
 dimension $n-1$, and is a union of rays like the two drawn
 as dotted lines in the bottom right of the figure.}
\label{fig:geom}
\end{figure}

Furthermore, thinking geometrically about the projection map onto
$C_\lambda$ yields a crucial insight. Examine Figure
\ref{fig:geom}---as drawn, it is clear that we can move the point $y$
slightly and it still projects to the same face of $C_\lambda$. In
fact, it seems that the only points $y$ for which this property does
not hold necessarily lie on rays that emanate orthogonally
from the corners of $C_\lambda$ (two such rays are drawn leaving the
bottom right corner). In other words, we are lead to believe
that for almost every~$y$, the projection map onto
$C_\lambda$ is a locally constant affine projection. This is indeed
true.

\begin{lemma}
\label{lemma:locconst}
For fixed $\lambda$, there exists a set $\cN_\lambda$ such that:
\begin{longlist}[(b)]
\item[(a)] $\cN_\lambda$ has Hausdorff dimension $n-1$, hence Lebesgue
 measure zero;
\item[(b)] for any $y \notin \cN_\lambda$, there exists a neighborhood
 $U$ of $y$ such that $P_{C_\lambda} \dvtx  U \rightarrow \mathbb{R}^n$ is
 simply the projection onto an affine subspace. In particular, the
 affine subspace is
 \begin{equation}
 \label{eq:affspace}
 \lambda (D_\cB)\T s + \operatorname{row}(D_{-\cB}),
 \end{equation}
 where $\cB$ and $s$ are the boundary set and signs for a
 solution $\hu_\lambda(y)$ of the dual problem~\eqref{eq:dual},
 \[
 \cB = \{i\dvtx |\hu_{\lambda,i}(y)| = \lambda\}  \quad  \mbox{and}  \quad
 s = \operatorname{sign}(\hu_{\lambda,\cB}(y)).
 \]
 The quantity~\eqref{eq:affspace} is well-defined in the sense that it
 is invariant under different choices of $\cB$ and $s$ (as the dual
 solution may not be unique).
\end{longlist}
\end{lemma}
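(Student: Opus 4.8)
The plan is to read off the conclusion from the polyhedral geometry of the set $C_\lambda=\{D\T u:\|u\|_\linf\le\lambda\}$, which is the image of the box $H=[-\lambda,\lambda]^m$ under the linear map $D\T$, together with the elementary theory of projection onto a convex polytope. The first step is to set up a dictionary between faces of $C_\lambda$ and the algorithmic bookkeeping $(\cB,s)$. Fixing a subset of box coordinates at their extreme values, $u_\cB=\lambda s$, and letting the remaining coordinates vary, exposes a face of $H$ (it maximizes the linear functional $u\mapsto\sum_{i\in\cB}s_i u_i$ over $H$); pushing this face through $D\T$ produces a subset of $C_\lambda$ whose affine hull is
\[
\lambda(D_\cB)\T s + \row(D_{-\cB}),
\]
which is precisely the subspace \eqref{eq:affspace}. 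This links the geometric objects (faces of $C_\lambda$) to the quantities tracked by the path algorithm.

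The heart of the argument is the normal-cone decomposition of $\R^n$ induced by $C_\lambda$. For each face $F$ the normal cone $N_F$ (the common value of the normal cone of $C_\lambda$ over all points in $\relint F$) is well defined, and $\R^n$ is the disjoint union of the regions $R_F=\relint F + N_F$, with $P_{C_\lambda}(y)\in\relint F$ exactly when $y\in R_F$. I would then establish two facts. First, $N_F$ is orthogonal to the linear space parallel to $\mathrm{aff}\,F$ --- immediate, since $p\in\relint F$ makes directions inside $\mathrm{aff}\,F$ two-sided feasible --- so on all of $R_F$ the map $P_{C_\lambda}$ agrees with the orthogonal affine projection onto $\mathrm{aff}\,F$. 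Second, each $R_F$ is convex and $n$-dimensional, because for a polytope $\dim N_F=n-\dim F$; hence its topological boundary has Hausdorff dimension $n-1$. Setting $\cN_\lambda=\R^n\setminus\bigcup_F\mathrm{int}(R_F)\subseteq\bigcup_F\partial R_F$, finiteness of the face set gives part (a), while for $y\in\mathrm{int}(R_F)$ an entire neighborhood stays in $R_F$, so by the first fact $P_{C_\lambda}$ is there the affine projection onto $\mathrm{aff}\,F=\lambda(D_\cB)\T s+\row(D_{-\cB})$, which is part (b).

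For the invariance statement, the key observation is that $\mathrm{aff}\,F$ is intrinsic to the point $p=P_{C_\lambda}(y)$: it is the affine hull of the unique face of $C_\lambda$ whose relative interior contains $p$, and so it cannot depend on the choice of dual solution. It then remains to check that every admissible pair $(\cB,s)$ --- arising from a possibly non-unique dual optimum when $\rank(D)<m$ --- produces this same subspace, i.e.\ $\lambda(D_\cB)\T s+\row(D_{-\cB})=\mathrm{aff}\,F$. I would prove this by noting that the face $G=\{u\in H:u_\cB=\lambda s\}$ of the box contains the dual solution $\hu_\lambda(y)$ in its relative interior (its interior coordinates satisfy $\|\hu_{\lambda,-\cB}\|_\linf<\lambda$ strictly), so $p=D\T\hu_\lambda(y)$ lies in $D\T(\relint G)$ and $D\T(\mathrm{aff}\,G)=\lambda(D_\cB)\T s+\row(D_{-\cB})$ must coincide with $\mathrm{aff}\,F$. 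Equivalently, two affine projections that agree with $P_{C_\lambda}$ on a common neighborhood of $y$ have the same range, which forces the subspace to be independent of $(\cB,s)$.

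I expect the main obstacle to lie exactly in the degenerate regime $\rank(D)<m$, where the dual optimum is not unique and several pairs $(\cB,s)$ describe the same point $p$: there I must be careful that the \emph{geometric} face $F$ and its affine hull are genuinely independent of the \emph{algebraic} parametrization, and that $\dim N_F=n-\dim F$ continues to hold even though $C_\lambda$ itself may be lower-dimensional in $\R^n$ (in which case every normal cone contains $\nul(D)=\row(D)^\perp$). A secondary technical point is the sharp dimension count in part (a): the upper bound $n-1$ is the finite-union argument, but to obtain the dimension exactly I should also exhibit at least one genuinely $(n-1)$-dimensional wall, for instance the interface between the region lying over the relative interior of $C_\lambda$ and a region lying over one of its facets.
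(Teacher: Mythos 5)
Your normal-fan scaffolding is sound: the decomposition $\R^n=\bigcup_F(\relint F+N_F)$, the identity $P_{C_\lambda}=P_{\mathrm{aff}\,F}$ on each region $R_F$, and the dimension count for part (a) are all correct, and your worry about lower-dimensional $C_\lambda$ is harmless since ambient normal cones satisfy $\dim N_F=n-\dim F$ for any polytope in $\R^n$. The genuine gap is in the invariance step, exactly where you predicted trouble but did not resolve it. What your face-of-the-box argument actually yields is one inclusion: for any dual solution with boundary pair $(\cB,s)$, writing $c=y-P_{C_\lambda}(y)$, the optimality condition $Dc\in N_H(\hu_\lambda(y))$ gives $\mathrm{supp}(Dc)\subseteq\cB$ with matching signs, hence $D\T G\subseteq F$ and so $\lambda(D_\cB)\T s+\row(D_{-\cB})\subseteq\mathrm{aff}\,F$. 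Nothing forces equality: $p\in\relint(D\T G)$ and $p\in\relint F$ are perfectly compatible with $\mathrm{aff}(D\T G)\subsetneq\mathrm{aff}\,F$ (a segment through the center of a square), so the assertion that $D\T(\mathrm{aff}\,G)$ ``must coincide with $\mathrm{aff}\,F$'' is a non sequitur, and the fallback argument (``two affine projections agreeing with $P_{C_\lambda}$ near $y$ have the same range'') is circular, since agreement of the $(\cB,s)$-projection with $P_{C_\lambda}$ near $y$ is precisely what is in question. The failure is real, not just unproven: take $n=1$, $m=3$, $D\T=(1,1,-2)$, so $C_\lambda=[-4\lambda,4\lambda]$, and $y=0$. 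Then $u=(\lambda,\lambda,\lambda)$ is a dual solution with $\cB=\{1,2,3\}$, $s=(1,1,1)$, giving $\lambda(D_\cB)\T s+\row(D_{-\cB})=\{0\}$, while $u=0$ gives $\row(D)=\R$; here $y=0$ lies in $\mathrm{int}(R_F)$ for $F=C_\lambda$, so your $\cN_\lambda$ (the union of region boundaries) does not contain it, yet both the identification \eqref{eq:affspace} and the invariance fail there for the first choice of $(\cB,s)$.

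The repair is a further dimension count that enlarges $\cN_\lambda$. Any dual solution satisfies $p=D\T\hu_\lambda(y)=\lambda(D_\cB)\T s+(D_{-\cB})\T\hu_{\lambda,-\cB}\in L_{\cB,s}:=\lambda(D_\cB)\T s+\row(D_{-\cB})$, and $y-p\in N_F$ where $F$ is the (intrinsic) face with $p\in\relint F$. So if a ``bad'' pair occurs at $y$, meaning $\dim\row(D_{-\cB})<\dim F$, then $y\in L_{\cB,s}+N_F$, a set of dimension at most $\dim\row(D_{-\cB})+(n-\dim F)\leq n-1$. Adjoining to your $\cN_\lambda$ the finitely many such sets, indexed over triples $(\cB,s,F)$ with $\dim\row(D_{-\cB})<\dim F$, preserves part (a); and for every $y$ outside the enlarged set, your one-sided inclusion upgrades to $L_{\cB,s}=\mathrm{aff}\,F$ for \emph{every} dual solution, which delivers both the ``in particular'' clause of (b) and the well-definedness in one stroke. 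Note that these extra exceptional sets are exactly the ``rays emanating orthogonally from the corners of $C_\lambda$'' in the paper's stated intuition---sets of the form (low-dimensional affine piece) $+$ (normal cone)---so with this patch your argument follows the paper's intended route; in the example above the bad points are the finite set $\{0,\pm 2\lambda,\pm 4\lambda\}$, consistent with the count.
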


The proof, which follows the intuition described above, is given in
\cite{supp}.

Hence we have the following result.

\begin{theorem}
\label{theorem:df}
For fixed $\lambda$, the solution $\hbeta_\lambda$ of the signal
approximation problem~\eqref{eq:dlassoi} has degrees of freedom
\[
\df(\hbeta_\lambda) =
\E\bigl[\operatorname{nullity}\bigl(D_{-\cB(y)}\bigr)\bigr],
\]
where the nullity of a matrix is the dimension of its null
space. The expectation here is taken over $\cB(y)$, the boundary
set of a dual solution $\hu_\lambda(y)$.
\end{theorem}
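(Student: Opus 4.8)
The plan is to apply Stein's formula \eqref{eq:stein} to the fit $g(y)=\hbeta_\lambda(y)$ (recall $X=I$, so the fit is exactly $\hbeta_\lambda$), thereby reducing the problem to computing the expected divergence $\E[(\nabla\cdot\hbeta_\lambda)(y)]$. The two regularity hypotheses required for \eqref{eq:stein}---that $\hbeta_\lambda$ is continuous and almost differentiable---have already been verified in the discussion preceding Lemma \ref{lemma:locconst}, via the nonexpansiveness of the residual map $I-P_{C_\lambda}$ together with Rademacher's theorem. So all that remains is to evaluate the divergence almost everywhere and then integrate.

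Next I would exploit the local structure supplied by Lemma \ref{lemma:locconst}. For any $y\notin\cN_\lambda$, there is a neighborhood $U$ on which $P_{C_\lambda}$ coincides with the projection onto the affine subspace \eqref{eq:affspace}, namely $\lambda(D_\cB)\T s+\row(D_{-\cB})$, with $\cB$ and $s$ held fixed throughout $U$. Since $\hbeta_\lambda(y)=(I-P_{C_\lambda})(y)$ is the residual of this projection, on $U$ it is the affine map
\begin{equation*}
y \mapsto \big(I-P_{\row(D_{-\cB})}\big)\big(y-\lambda(D_\cB)\T s\big) = P_{\nul(D_{-\cB})}\big(y-\lambda(D_\cB)\T s\big),
\end{equation*}
in agreement with \eqref{eq:dlassosol2}. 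Its linear part is the constant matrix $P_{\nul(D_{-\cB})}$, so its divergence equals the trace of this linear part: $(\nabla\cdot\hbeta_\lambda)(y)=\tr\big(P_{\nul(D_{-\cB})}\big)$.

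To finish, I would use that the trace of an orthogonal projection equals the dimension of the subspace onto which it projects, so $\tr(P_{\nul(D_{-\cB})})=\dim\nul(D_{-\cB})=\mathrm{nullity}(D_{-\cB(y)})$. This quantity is well-defined despite the non-uniqueness of the dual solution, because Lemma \ref{lemma:locconst} guarantees that the affine subspace \eqref{eq:affspace}---and hence its direction space $\row(D_{-\cB})$ together with the orthogonal complement $\nul(D_{-\cB})$---is invariant under the choice of $\cB$ and $s$. Thus $(\nabla\cdot\hbeta_\lambda)(y)=\mathrm{nullity}(D_{-\cB(y)})$ for every $y\notin\cN_\lambda$. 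Because $\cN_\lambda$ has Lebesgue measure zero by part (a) of the lemma and $y$ is absolutely continuous, the exceptional set contributes nothing to the expectation, and Stein's formula yields $\df(\hbeta_\lambda)=\E[(\nabla\cdot\hbeta_\lambda)(y)]=\E[\mathrm{nullity}(D_{-\cB(y)})]$.

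The genuinely hard work sits in Lemma \ref{lemma:locconst}, deferred to the supplement: proving that the projection is locally an affine projection off a set of dimension $n-1$, and that the relevant subspace is invariant across dual solutions. Granting that lemma, the remaining obstacle is mostly bookkeeping---confirming that the locally affine representation is genuinely differentiable with the stated constant Jacobian on $U$, and that the measure-zero exceptional set is harmless under the expectation. Neither step poses a real difficulty once the geometric picture in Figure \ref{fig:geom} has been made rigorous.
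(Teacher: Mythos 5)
Your proposal is correct and follows essentially the same route as the paper's own proof: verify the regularity conditions for Stein's formula via nonexpansiveness of the residual map $I-P_{C_\lambda}$, invoke Lemma \ref{lemma:locconst} to get the locally affine representation $\hbeta_\lambda(y')=P_{\nul(D_{-\cB})}\big(y'-\lambda(D_\cB)\T s\big)$ on a neighborhood of each $y\notin\cN_\lambda$, take the divergence as $\tr\big(P_{\nul(D_{-\cB})}\big)=\mathrm{nullity}(D_{-\cB})$, and conclude using the measure-zero bound on $\cN_\lambda$. Your added remarks on the invariance of $\nul(D_{-\cB})$ across dual solutions match the note accompanying the theorem in the paper, so there is nothing to correct.
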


\textit{Note}: Above, we can choose any dual solution at $y$ to
construct the boundary set $\cB(y)$, because by Lemma
\ref{lemma:locconst}, all dual solutions give rise to the same
$\operatorname{null}(D_{-\cB(y)})$ (almost everywhere in $y$).

\begin{pf*}{Proof of Theorem~\ref{theorem:df}}
Consider $y \notin \cN_\lambda$, and let $\cB$ and $s$ be the boundary
set and signs of a dual solution $\hu_\lambda(y)$. By Lemma
\ref{lemma:locconst}, there is a neighborhood $U$ of $y$ such that
\[
\hbeta_\lambda(y') = (I - D\T \hu_\lambda)(y') =
P_{\operatorname{null}(D_{-\cB})}\bigl(y'-\lambda(D_\cB)^T s\bigr)
\]
for all $y' \in U$. Taking the divergence at $y$ we get
\[
(\nabla \cdot \hbeta_\lambda)(y) = \tr\bigl(P_{\operatorname{null}(D_{-\cB})}\bigr) =
\operatorname{nullity}(D_{-\cB}),
\]
since the trace of a projection matrix is just its rank.
This holds for almost every $y$ because $\cN_\lambda$ has measure
zero, and we can use Stein's formula to conclude that
$\df(\hbeta_\lambda) = \E[\operatorname{nullity}(D_{-\cB(y)})]$.
\end{pf*}

Now if we consider problem~\eqref{eq:dlasso}, with the design matrix\vspace*{1pt}
satisfying\break \mbox{$\operatorname{rank}(X)=p$}, then it turns out that the same
degrees of freedom formula holds for the fit $X\hbeta_\lambda$. This
is relatively straightforward to show, but requires sorting out
the details of how to turn statements involving $\ty,\tD$ into those
involving $y,D$. First, by the same arguments as
before, we know that $X\hbeta_\lambda(\ty)$ is contracting as a
function of~$\ty$. But $\ty=P_{\mathrm{col}(X)}(y)$ is contracting in
$y$, so indeed $X\hbeta_\lambda(y)$ is contracting, hence continuous
and almost differentiable, as a function of~$y$.\vadjust{\goodbreak}

Next we must establish that $\tD\T\hu_\lambda(y)$ is a locally affine
projection for almost every $y$. Well, by Lemma
\ref{lemma:locconst}, this is true of $\tD\T\hu_\lambda(\ty)$ for
$\ty \notin \cN_\lambda$, so we have the desired result except
on $\cM_\lambda=(P_{\mathrm{col}(X)})^{-1}(\cN_\lambda)$. Following
the arguments in the proof of Lemma~\ref{lemma:locconst}, it is not
hard to see that $\cN_\lambda$ now has dimension $p-1$, so $\cM_\lambda$
has measure zero.

With these properties satisfied, we have the following result.

\begin{theorem}
\label{theorem:xdf}
Suppose that $\operatorname{rank}(X)=p$.
For fixed $\lambda$, the fit $X\hbeta_\lambda$ of the generalized
lasso~\eqref{eq:dlasso} has degrees of freedom
\[
\df(X\hbeta_\lambda) =
\E\bigl[\operatorname{nullity}\bigl(D_{-\cB(y)}\bigr)\bigr],
\]
where $\cB(y)$ is the boundary set of a dual solution
$\hu_\lambda(y)$.
\end{theorem}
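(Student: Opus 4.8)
The plan is to mirror the proof of Theorem \ref{theorem:df} almost verbatim, replacing $y,D$ by the transformed data $\ty=XX^+y$ and penalty matrix $\tD=DX^+$ from \eqref{eq:yd}, and then to reconcile the resulting divergence---which now lives on the transformed problem---with the nullity of the \emph{original} matrix $D_{-\cB}$. The regularity hypotheses needed for Stein's formula \eqref{eq:stein} have already been checked in the paragraphs preceding the theorem: $X\hbeta_\lambda(y)$ is nonexpansive (hence continuous and almost differentiable) as a function of $y$, and $\tD\T\hu_\lambda(y)$ is a locally affine projection for every $y\notin\cM_\lambda$, where $\cM_\lambda=(P_{\col(X)})^{-1}(\cN_\lambda)$ has measure zero. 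So the only real work is the divergence calculation.

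First I would fix $y\notin\cM_\lambda$ and let $\cB,s$ be the boundary set and signs of a dual solution $\hu_\lambda(y)$. By Lemma \ref{lemma:locconst} applied at the point $\ty\notin\cN_\lambda$, together with the explicit form \eqref{eq:dlassosol4}, there is a neighborhood of $y$ on which
\begin{equation*}
X\hbeta_\lambda(y) = P_{\nul(\tD_{-\cB})}\big(\ty-\lambda(\tD_{-\cB})\T s\big) = P_{\nul(\tD_{-\cB})}\big(XX^+y-\lambda(\tD_{-\cB})\T s\big),
\end{equation*}
with $\cB$ and $s$ held constant. Differentiating in $y$ and taking the trace, the constant term drops out and the divergence is
\begin{equation*}
(\nabla\cdot X\hbeta_\lambda)(y) = \tr\big(P_{\nul(\tD_{-\cB})}\,XX^+\big) = \tr\big(P_{\nul(\tD_{-\cB})}\,P_{\col(X)}\big),
\end{equation*}
since $XX^+=P_{\col(X)}$.

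The step I expect to carry the real content is showing this trace equals $\mathrm{nullity}(D_{-\cB})$; this is where the full-rank assumption $\rank(X)=p$ enters. Two observations drive it. Because $X^+=(X\T X)^{-1}X\T$ has full row rank $p$, right-multiplication by it preserves rank, so $\rank(\tD_{-\cB})=\rank(D_{-\cB}X^+)=\rank(D_{-\cB})$. And because the columns of $\tD\T=X(X\T X)^{-1}D\T$ lie in $\col(X)$, we have $\row(\tD_{-\cB})\subseteq\col(X)$, equivalently $\nul(\tD_{-\cB})\supseteq\col(X)^\perp$. Writing the orthogonal splitting $\nul(\tD_{-\cB})=\col(X)^\perp\oplus W$ with $W=\nul(\tD_{-\cB})\cap\col(X)$, the cross terms in $P_{\nul(\tD_{-\cB})}P_{\col(X)}=(P_{\col(X)^\perp}+P_W)P_{\col(X)}$ vanish and this product collapses to $P_W$. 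Hence the trace is $\dim W$, and a dimension count gives $\dim W = \dim\nul(\tD_{-\cB})-(n-p) = (n-\rank(D_{-\cB}))-(n-p) = p-\rank(D_{-\cB}) = \mathrm{nullity}(D_{-\cB})$.

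Putting these together, $(\nabla\cdot X\hbeta_\lambda)(y)=\mathrm{nullity}(D_{-\cB(y)})$ for all $y\notin\cM_\lambda$, i.e.\ almost everywhere, and Stein's formula \eqref{eq:stein} then yields $\df(X\hbeta_\lambda)=\E[\mathrm{nullity}(D_{-\cB(y)})]$. As in Theorem \ref{theorem:df}, well-definedness (independence of the choice of dual solution) follows from Lemma \ref{lemma:locconst}: invariance of $\row(\tD_{-\cB})$ gives invariance of $\nul(\tD_{-\cB})$, hence of $W$ and of $\mathrm{nullity}(D_{-\cB})=\dim W$.
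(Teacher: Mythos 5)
Your proposal is correct and takes essentially the same route as the paper: the identical Stein's-formula setup via Lemma \ref{lemma:locconst} and the set $\cM_\lambda$, reduction of the divergence to $\tr\big(P_{\nul(\tD_{-\cB})}P_{\col(X)}\big)$, and then the same two facts---$\nul(\tD_{-\cB})\supseteq \nul(X\T)=\col(X)^\perp$ and $\rank(\tD_{-\cB})=\rank(D_{-\cB})$ from $\rank(X)=p$. Your orthogonal splitting $\nul(\tD_{-\cB})=\col(X)^\perp\oplus W$ with trace $\dim W$ is just a rephrasing of the paper's projection identity $P_{\nul(\tD_{-\cB})}P_{\col(X)}=P_{\nul(\tD_{-\cB})}-P_{\nul(X^+)}$ (whose right-hand side equals $P_W$), followed by the same dimension count $\mathrm{nullity}(D_{-\cB}X^+)-\mathrm{nullity}(X^+)=\mathrm{nullity}(D_{-\cB})$.
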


\textit{Note}: As before, we can construct the boundary set $\cB(y)$ from
any dual solution at $y$, because the quantity
$\operatorname{null}(D_{-\cB(y)})$ is invariant (almost everywhere in $y$).

\begin{pf*}{Proof of Theorem~\ref{theorem:xdf}}
Let $y \notin \cM_\lambda$. We need to show that
$(\nabla \cdot\break X\hbeta_\lambda)(y)=
\operatorname{nullity}(D_{-\cB(y)})$, and then applying
Stein's formula (along with the fact that $\cM_\lambda$ has measure
zero) gives the result.

Let $\cB$ denote the boundary set of a dual solution
$\hu_\lambda(y)$. Then the fit is
\[
X\hbeta_\lambda(y) =
P_{\operatorname{null}(\tD_{-\cB})}
P_{\mathrm{col}(X)} y + c,
\]
where $c$ denotes the terms that have zero derivative with
respect to $y$. Using the fact
$\operatorname{null}(X^+)=\operatorname{null}(X\T)$  and
$\operatorname{null}(\tD_{-\cB}) \supseteq \operatorname{null}(X^+)$,
\begin{eqnarray*}
P_{\operatorname{null}(\tD_{-\cB})}
P_{\mathrm{col}(X)}
&=& P_{\operatorname{null}(\tD_{-\cB})} -
P_{\operatorname{null}(\tD_{-\cB})}
P_{\operatorname{null}(X^+)} \\
&=& P_{\operatorname{null}(\tD_{-\cB})} -
P_{\operatorname{null}(X^+)}.
\end{eqnarray*}
Therefore, computing the divergence:
\begin{eqnarray*}
\big(\nabla\cdot X\hbeta_\lambda\big)(y)
&=& \operatorname{nullity}(D_{-\cB}X^+) -
\operatorname{nullity}(X^+) \\
&=& \operatorname{nullity}(D_{-\cB}),
\end{eqnarray*}
where the last equality follows because $X$ has
full column rank. This completes the proof.
\end{pf*}

We saw in Section~\ref{sec:dprops} that the null space of $D$ has a
nice interpretation for the fused lasso problem. In this case, the
theorem also becomes easier to interpret.

\begin{corollary}[(Degrees of freedom of the fused lasso)]
\label{cor:fuse}
Suppose that $\operatorname{rank}(X)=p$ and that $D$
corresponds to the fused lasso penalty on an arbitrary graph.
Then for fixed $\lambda$, the fit $X\hbeta_\lambda$ of
\eqref{eq:dlasso} has degrees of freedom
\[
\df(X\hbeta_\lambda)=
\E[\mbox{number of fused groups in } \hbeta_\lambda(y)].\vadjust{\goodbreak}
\]
\end{corollary}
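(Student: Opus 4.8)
The plan is to specialize the general degrees-of-freedom formula of Theorem~\ref{theorem:xdf} to the fused-lasso penalty, using the graph-theoretic description of $\nul(D_{-\cB})$ developed in Section~\ref{sec:dprops}. Theorem~\ref{theorem:xdf} already supplies $\df(X\hbeta_\lambda)=\E[\mathrm{nullity}(D_{-\cB(y)})]$ for an arbitrary $D$ whenever $\rank(X)=p$, so the entire task reduces to identifying, for a fixed generic $y$, the integer $\mathrm{nullity}(D_{-\cB(y)})$ with the number of fused groups of the primal solution $\hbeta_\lambda(y)$. The corollary then follows by taking the expectation.

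First I would recall from Section~\ref{sec:dprops} that, for the fused lasso on a connected graph $\cG$, deleting the rows indexed by $\cB$ amounts to deleting the corresponding edges, producing a subgraph $\cG_{-\cB}$, and that $\mathrm{nullity}(D_{-\cB})$ equals the number of connected components of $\cG_{-\cB}$, say $A_1,\ldots,A_k$, with $\nul(D_{-\cB})=\mathrm{span}\{\mathds{1}_{A_1},\ldots,\mathds{1}_{A_k}\}$. The second ingredient is that $\hbeta_\lambda\in\nul(D_{-\cB})$: for $X=I$ this is immediate from \eqref{eq:dlassosol2}, and for $\rank(X)=p$ it was shown in Section~\ref{sec:xfull} via $0=\tD_{-\cB}X\hbeta_\lambda=D_{-\cB}\hbeta_\lambda$. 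Consequently $\hbeta_\lambda$ is constant on each component $A_i$, so each $A_i$ lies within a single fused group, and the number of fused groups is \emph{at most} $k=\mathrm{nullity}(D_{-\cB})$.

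The crux, and the step I expect to be the main obstacle, is the reverse inequality: I must rule out the possibility that two \emph{distinct} components $A_i,A_j$ adjacent in $\cG$ (joined by a boundary edge of $\cB$) happen to carry the \emph{same} constant value, which would merge them into one fused group and make the count strictly smaller than $k$. This is precisely the genericity content underlying the primal-dual correspondence of Section~\ref{sec:dprops}. On the neighborhood $U$ supplied by Lemma~\ref{lemma:locconst}, $\hbeta_\lambda$ is a fixed affine function of $y$ (explicitly \eqref{eq:dlassosol2} for $X=I$, and \eqref{eq:dlassosol3} for $\rank(X)=p$), so the difference between its value on $A_i$ and on $A_j$ is an affine functional of $y$. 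The genuine work is to argue that for each candidate pair $(\cB,s)$ this functional is not identically zero, so that its vanishing locus is a \emph{proper} affine subspace of $\R^n$; granting this, and using that there are only finitely many candidate pairs $(\cB,s)$, the union of these loci is a finite union of proper affine subspaces and hence has Lebesgue measure zero.

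Finally, I would fold this new null set into the exceptional set $\cM_\lambda$ of Theorem~\ref{theorem:xdf} (itself the preimage of the measure-zero $\cN_\lambda$); the union of two measure-zero sets is measure zero, and off this union the number of fused groups of $\hbeta_\lambda(y)$ equals the number of connected components of $\cG_{-\cB(y)}$, namely $\mathrm{nullity}(D_{-\cB(y)})$. Taking the expectation and invoking Theorem~\ref{theorem:xdf} then gives $\df(X\hbeta_\lambda)=\E[\,\text{number of fused groups in }\hbeta_\lambda(y)\,]$, as claimed.
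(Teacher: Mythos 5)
Your proposal is correct, and its skeleton is exactly the paper's: Theorem \ref{theorem:xdf} gives $\df(X\hbeta_\lambda)=\E[\mathrm{nullity}(D_{-\cB(y)})]$, Section \ref{sec:dprops} identifies $\mathrm{nullity}(D_{-\cB})$ with the number of connected components of $\cG_{-\cB}$, and Section \ref{sec:xfull} gives $\hbeta_\lambda\in\nul(D_{-\cB})$, so the solution is constant on each component. What differs is that the paper's proof is \emph{only} these two identifications---it implicitly counts the components of $\cG_{-\cB(y)}$ themselves as the fused groups and stops---whereas you notice that, read literally, the number of fused groups could be strictly smaller if two components joined by a boundary edge happen to carry the same value, and you supply a genericity argument to rule this out off a null set. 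So the step you call the crux is one the paper silently elides, and your version buys a proof of the literal statement. The one piece you leave open (``granting this,'' the non-degeneracy of the affine difference functional) is easily filled: for $X=I$, by \eqref{eq:dlassosol2} the functional $y\mapsto\hbeta_{\lambda,a}(y)-\hbeta_{\lambda,b}(y)$ (with $a\in A_i$, $b\in A_j$, $i\neq j$) has linear part $\bigl(\mathds{1}_{A_i}/|A_i|-\mathds{1}_{A_j}/|A_j|\bigr)\T y$, which is visibly nonzero; for general $X$ with $\rank(X)=p$, writing $e_a$ for the $a$th standard basis vector, the linear part is $(e_a-e_b)\T X^+ P_M$ where $M=\nul(\tD_{-\cB})\cap\col(X)=X\,\nul(D_{-\cB})$ (using $P_{\nul(\tD_{-\cB})}P_{\col(X)}=P_{\nul(\tD_{-\cB})}-P_{\nul(X^+)}=P_M$, as in the proof of Theorem \ref{theorem:xdf}), and pairing against $w=X\mathds{1}_{A_i}\in M$ gives $(e_a-e_b)\T X^+P_M w=(e_a-e_b)\T\mathds{1}_{A_i}=1\neq 0$. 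With that, your finite union over candidate pairs $(\cB,s)$ of proper affine subspaces has measure zero, folds into $\cM_\lambda$ as you say, and the corollary follows.
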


\begin{pf}
If $\cG$ denotes the graph, we showed in Section~\ref{sec:dprops}
that the nullity of $D_{-\cB(\lambda,y)}$ is the number of
connected components in $\cG_{-\cB(\lambda,y)}$. We also showed (see
Section~\ref{sec:xfull} for the extension to a general design $X$)
that the coordinates of $\hbeta_\lambda(y)$ are fused on
the connected components of $\cG_{-\cB(\lambda,y)}$, giving the
result.
\end{pf}

By slightly modifying the penalty matrix, we can derive the degrees of
freedom of the sparse fused lasso.

\begin{corollary}[(Degrees of freedom of the sparse fused
lasso)]
\label{cor:sfuse}
Suppose that $\operatorname{rank}(X)=p$ and write $X_i$ for the $i$th row of
$X$. Consider the sparse fused lasso problem:
%
\begin{equation}
\label{eq:sfuse}
\minimize_{\beta\in\R^p}
\sum_{i=1}^n (y_i - X_i\T\beta)^2 + \lambda_1 \sum_{i=1}^p |\beta_i|
+ \lambda_2   \sum_{(i,j) \in E} |\beta_i-\beta_j|,
\end{equation}
where $E$ is an arbitrary set of edges between nodes
$\beta_1,\ldots,\beta_p$. Then for fixed $\lambda_1,\lambda_2$, the
fit $X\hbeta_{\lambda_1,\lambda_2}$ of~\eqref{eq:sfuse} has degrees of
freedom
\[
\df(X\hbeta_{\lambda_1,\lambda_2})=
\E[\mbox{number of nonzero fused groups in }
\hbeta_{\lambda_1,\lambda_2}(y)].
\]
\end{corollary}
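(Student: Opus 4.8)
The plan is to reduce the sparse fused lasso \eqref{eq:sfuse} to the generalized lasso \eqref{eq:dlasso} and then invoke Theorem \ref{theorem:xdf}, exactly as Corollary \ref{cor:fuse} does for the ordinary fused lasso. First I would put \eqref{eq:sfuse} in standard form: dividing the criterion by $2$ leaves the minimizer unchanged, and the combined penalty $\frac{\lambda_1}{2}\|\beta\|_\lone + \frac{\lambda_2}{2}\sum_{(i,j)\in E}|\beta_i-\beta_j|$ is precisely $\|D\beta\|_\lone$ for the single $(p+|E|)\times p$ matrix
\[
D = \left[\begin{array}{c} \frac{\lambda_1}{2}\, I \\ \frac{\lambda_2}{2}\, D_\cG \end{array}\right],
\]
where $D_\cG$ is the edge-difference (incidence) matrix of the graph with edge set $E$. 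In other words, the sparse fused lasso is the generalized lasso with this $D$ and tuning parameter $\lambda=1$. Since $\rank(X)=p$, Theorem \ref{theorem:xdf} applies directly and gives $\df(X\hbeta_{\lambda_1,\lambda_2}) = \E[\mathrm{nullity}(D_{-\cB(y)})]$, so the whole task reduces to identifying this nullity with the number of nonzero fused groups.

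The core step is to compute $\nul(D_{-\cB})$ for this block matrix. Because $\lambda_1,\lambda_2>0$, the positive scalings are irrelevant to the null space: a vector $v$ lies in $\nul(D_{-\cB})$ exactly when $v_i=v_j$ for every \emph{interior} difference row $(i,j)$ and $v_i=0$ for every \emph{interior} identity row $i$. The first family of constraints forces $v$ to be constant on each connected component of the subgraph $\cG_{-\cB}$ obtained by keeping only the interior edges of $E$ (these are the fused groups of Corollary \ref{cor:fuse}); the second family zeroes out any component that contains a node whose identity row is interior. Hence $\nul(D_{-\cB})$ is spanned by the indicators of those components of $\cG_{-\cB}$ all of whose nodes have their identity rows \emph{on the boundary}, and $\mathrm{nullity}(D_{-\cB})$ equals the number of such ``free'' groups. (A clean bookkeeping device is to view $D$ as the incidence matrix of an augmented graph with a ground node $0$ joined to each node $i$ by its identity edge and pinned to the value $0$; then the free groups are exactly the components of the augmented graph not containing $0$.)

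Finally I would match the free groups to the genuinely nonzero fused groups of $\hbeta$. As established in Section \ref{sec:xfull}, the primal solution satisfies $\hbeta_{\lambda_1,\lambda_2}\in\nul(D_{-\cB})$, so it is constant on each fused group, is forced to $0$ on every group pinned by an interior identity row, and equals $\sum_{A \text{ free}} c_A \mathds{1}_A$ for coefficients $c_A$ that are affine functions of $y$. Thus the groups allowed to be nonzero are exactly the free groups counted by the nullity, and for almost every $y$ every such $c_A$ is in fact nonzero: the vanishing of a given $c_A$ is a single linear (codimension-one) condition on $y$, and since there are only finitely many possible pairs $(\cB,s)$ the union of these exceptional hyperplanes has Lebesgue measure zero. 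Therefore $\mathrm{nullity}(D_{-\cB(y)})$ equals the number of nonzero fused groups of $\hbeta_{\lambda_1,\lambda_2}(y)$ almost everywhere, and taking expectations in Theorem \ref{theorem:xdf} yields the claim.

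I expect the combinatorial step---characterizing the null space when identity rows and difference rows are interleaved, and recognizing which groups get pinned to zero---to be the main obstacle, together with the (routine but necessary) almost-everywhere argument that lets us replace ``free'' groups by groups that are actually nonzero.
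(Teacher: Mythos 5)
Your proposal is correct and takes essentially the same route as the paper's own proof: the paper likewise stacks the edge-incidence matrix and a positively scaled identity into a single penalty matrix (it takes $\lambda=\lambda_2$ with weight $\lambda_1/\lambda_2$ on the identity block, a rescaling that leaves every null space unchanged), interprets $\mathrm{nullity}(D_{-\cB})$ via the subgraph obtained by deleting boundary edges and the nodes whose identity rows are interior, and then invokes Theorem \ref{theorem:xdf}. If anything, your writeup is more careful on two points the paper elides: your ``free component'' characterization of $\nul(D_{-\cB})$ correctly kills any interior-edge component containing a pinned node (the paper's delete-nodes-then-count-components recipe would overcount in that degenerate configuration), and your codimension-one hyperplane argument supplies the almost-everywhere step identifying free components with genuinely nonzero fused groups, which the paper asserts without proof.
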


\begin{pf}
We can write~\eqref{eq:sfuse} in the generalized lasso framework by
taking $\lambda=\lambda_2$ and
\[
D = \left[
\matrix{\displaystyle
D_\mathrm{fuse} \vspace*{3pt}\cr\displaystyle
\frac{\lambda_1}{\lambda_2} I
}
\right],
\]
where $D_\mathrm{fuse}$ is the fused lasso matrix corresponding to the
underlying graph, with each row giving the difference
between two nodes connected by an edge.

In Section~\ref{sec:dprops}, we analyzed the null space of
$D_\mathrm{fuse}$ to interpret the primal-dual
correspondence for the fused lasso. A similar interpretation can be
achieved with $D$ as defined above. Let $\cG$ denote the underlying
graph and suppose that it has $m$ edges (and $p$ nodes), so that
$D_\mathrm{fuse}$ is $m\times p$ and $D$ is $(m+p)\times p$. Also,
suppose that we decompose the boundary set as $\cB= \cB_1\cup\cB_2$,
where $\cB_1$ contains the dual coordinates in $\{1,\ldots, m\}$ and
$\cB_2$ contains those in $\{m+1,\ldots, m+p\}$. We can associate the
first $m$ coordinates with the $m$ edges, and the last $p$ coordinates
with the $p$ nodes. Then the matrix $D_{-\cB}$ defines a
subgraph $\cG_{-\cB}$ that can be constructed as follows:
\begin{longlist}[(2)]
\item[(1)] delete the edges of $\cG$ that correspond to
coordinates in $\cB_1$, yielding~$\cG_{-\cB_1}$;
\item[(2)] keep only the nodes of $\cG_{-\cB_1}$ that correspond to
coordinates in $\cB_2$, yielding $\cG_{-\cB}$.
\end{longlist}
It is straightforward to show that the nullity of $D_{-\cB}$ is the
number of connected components in $\cG_{-\cB}$. Furthermore,\vadjust{\goodbreak} the
solution $\hbeta_{\lambda_1,\lambda_2}(y)$ is fused on each connected
component of $\cG_{-\cB}$ and zero in all other coordinates. Applying
Theorem~\ref{theorem:xdf} gives the result.
\end{pf}

The above corollary proves a conjecture
of~\cite{fuse}, in which the authors hypothesize that the degrees of
freedom of the sparse 1d fused lasso fit is equal to the number of
nonzero fused coordinate groups, in expectation. But Corollary
\ref{cor:sfuse} covers any underlying graph, which makes it a much
more general result.

By examining the null space of $D_{-\cB}$ for other applications, and
applying Theorem~\ref{theorem:xdf}, one
can obtain more corollaries on degrees of freedom. We omit the details
for the sake of brevity, but list some such results in Table
\ref{table:df}, along with those on the fused lasso for the sake of
completeness.
The table's first result, on the degrees of freedom of the lasso, was
already established in~\cite{lassodf}. The results on
trend filtering and outlier detection can actually be derived
from this lasso result, because these problems correspond to the case
$\operatorname{rank}(D)=m$, and can be transformed into a regular
lasso problem~\eqref{eq:dlasso4}.
For the outlier detection problem, we actually need to make a
modification in order for the design matrix to have full column
rank. Recall the problem formulation~\eqref{eq:outlie2}, where the
coefficient vector is $(\alpha,\beta)\T$, the first block concerning
the outliers, and the second the regression coefficients. We set
$\alpha_1=\cdots=\alpha_p=0$, the interpretation
being that we know a priori $p$ points $y_1,\ldots, y_p$ come from
the true model, and only rest of the points $y_{p+1},\ldots, y_n$ can
possibly be outliers (this is quite reasonable for a method that
simultaneous performs a $p$-dimensional linear regression and detects
outliers).

%
\begin{table}
\tabcolsep=0pt
\tablewidth=300pt
\caption{Corollaries of Theorem \protect\ref{theorem:xdf}, giving
unbiased estimates of $\df(X\hbeta_\lambda)$ for various
problems discussed in Section \protect\ref{sec:apps}. These assume that
$\operatorname{rank}(X)=p$}
\label{table:df}
\begin{tabular*}{300pt}{@{\extracolsep{\fill}}ll@{}}
\hline
\textbf{Problem} & \multicolumn{1}{l@{}}{\textbf{Unbiased estimate of} $\bolds{\df(X\hbeta_\lambda)}$} \\
\hline
Lasso
& Number of nonzero coordinates \\
Fused lasso
& Number of fused groups \\
Sparse fused lasso
& Number of nonzero fused groups \\
Polynomial trend filtering, order $k$
& Number of knots ${}+{}k+1$ \\
Outlier detection
& Number of outliers $+{}p$ \\
\hline
\end{tabular*}
\end{table}

\subsection{\texorpdfstring{Model selection.}{Model selection}}

Note that the estimates in Table~\ref{table:df} are all easily
computable from the solution vector $\hbeta_\lambda$. The estimates for
the lasso, (sparse) fused lasso, and outlier detection problems can be
obtained by simply counting the appropriate quantity in
$\hbeta_\lambda$. The
estimate for trend filtering may be difficult to determine visually,
as it may be difficult to identify the knots in a
piecewise polynomial by eye, but the knots can counted from the
nonzeros of $D\hbeta_\lambda$. All of this is important because
it means that we can readily use model selection criteria like
$C_p$ or BIC for these problems, which employ degrees of freedom
to assess risk. For example, for the estimate $X\hbeta_\lambda$ of the
underlying mean $\mu$, the $C_p$ statistic is
\[
C_p (\lambda) = \|y-X\hbeta_\lambda\|_\ltwo^2 -n\sigma^2 +
2\sigma^2\,\df(X\hbeta_\lambda),
\]
and is an unbiased estimate of the true risk
$\E [\|\mu-X\hbeta_\lambda\|_\ltwo^2 ]$. Hence, we can define
\[
\widehat{C}_p (\lambda) = \|y-X\hbeta_\lambda\|_\ltwo^2 -n\sigma
^2 +
2\sigma^2\operatorname{nullity}(D_{-\cB}),
\]
replacing $\df(X\hbeta_\lambda)$ by its own unbiased estimate
$\operatorname{nullity}(D_{-\cB})$.
This modified statistic $\widehat{C}_p(\lambda)$ is
still unbiased as an estimate of the true risk, and this suggests
choosing $\lambda$ to minimize $\widehat{C}_p(\lambda)$. For this
task, it turns out that $\widehat{C}_p(\lambda)$\vspace*{-1pt} obtains its minimum
at one of the critical points $\{\lambda_1,\ldots,\lambda_T\}$ in the
solution path of $\hbeta_\lambda$. This is true because
$\operatorname{nullity}(D_{-\cB})$ is a step function over these critical
points, and the residual sum of squares
$\|y-X\hbeta_\lambda\|_\ltwo^2$ is monotone nondecreasing for
$\lambda$ in between critical points [this can be checked using
\eqref{eq:dlassosol4}]. Therefore, Algorithm~\ref{alg:d} can be used to
simultaneously compute the solution path and select a model, by
simply computing $\widehat{C}_p(\lambda_k)$ at each iteration $k$.

\subsection{\texorpdfstring{Shrinkage and the $\ell_1$ norm.}{Shrinkage and the l1 norm}}

At first glance, the results in Table~\ref{table:df} seem
both intuitive and unbelievable. For the fused lasso, for example, we
are told that on average we spend a single degree of freedom on each
group of coordinates in the solution. But these groups are
being adaptively selected based on the data, so aren't we using more
degrees of freedom in the end? As another example, consider the trend
filtering result: for a cubic fit, the degrees of
freedom is the number of knots $+{}4$, in expectation. A cubic
regression spline also has degrees of freedom equal to the
number of knots $+{}4$; however, in this case we fix the knot
locations ahead of time, and for cubic trend filtering the knots
are selected automatically. How can this be?

This seemingly remarkable property---that searching for the nonzero
coordinates, fused groups, knots, or outliers does not cost us anything
in terms of degrees of freedom---is explained by the shrinking
nature of the $\ell_1$ penalty. Looking back at the criterion in
\eqref{eq:dlasso}, it is not hard to see that the nonzero entries
in $D\hbeta_\lambda$ are shrunken toward zero (imagine the problem in
constrained form, instead of Lagrange form).
For the fused lasso, this means that once the groups are
``chosen,'' their coefficients are shrunken towards each other, which is less
greedy than simply fitting the group coefficients to minimize the
squared error term. Roughly speaking, this makes up for the fact that
we chose the fused groups adaptively, and in expectation, the degrees
of freedom turns out ``just right'': it is simply the number of
groups.

This leads us to think about the $\ell_0$-equivalent of problem
\eqref{eq:dlasso}, which is achieved by replacing the $\ell_1$ norm
by an $\ell_0$ norm (giving\vadjust{\goodbreak} best subset regression when
\mbox{$D=I$}). Solving this problem requires a combinatorial
optimization, and this makes it difficult to study the properties of
its solution in general. However, we do know that the solution of the
$\ell_0$ problem does not enjoy any shrinkage
property like that of the lasso solution: if we fix which entries of
$D\beta$ are nonzero, then the penalty term is constant and the
problem reduces to an equality-constrained regression. Therefore, in
light of our above discussion, it seems reasonable to conjecture that
the $\ell_0$ fit has more than $\E[\operatorname{nullity}(D_{-\cB})]$
degrees of freedom. When $D=I$, this would mean that the degrees of
freedom of the best subset regression fit is more than the number of
nonzero coefficients, in expectation.

\section{\texorpdfstring{Discussion.}{Discussion}}
\label{sec:discuss}

We have studied a generalization of the lasso problem, in which the
penalty is $\|D\beta\|_1$ for a matrix $D$. Several
important problems (such as the fused lasso and trend filtering) can
be expressed as a special case of this, corresponding to a particular
choice of $D$. We developed an algorithm to compute a solution
path for this general problem, provided that the design matrix $X$ has
full column rank. This is achieved by instead solving the (easier)
Lagrange dual problem, which, using simple duality theory, yields a
solution to the original problem after a linear transformation.

Both the dual solution path and the original solution path are
continuous\vspace*{1pt} and piecewise linear with respect to $\lambda$. The
original solution $\hbeta_\lambda$ can be written explicitly in terms
of the boundary set $\cB$, which contains the coordinates of
the dual solution that are equal to $\pm\lambda$, and the signs of
these coordinates $s$. Furthermore, viewing the dual solution
as a projection onto a convex set, we derived a simple formula for the
degrees of freedom of the generalized lasso fit.
This formula emphasizes the importance of the dual perspective, as it
is fundamentally tied to the boundary set $\cB$.
For the fused lasso problem, this result reveals that the number of
nonzero fused groups in the solution is an unbiased estimate of the
degrees of freedom of the fit, and this holds true for any underlying
graph structure. Other corollaries follow, as well.

An implementation of our path algorithm, following the ideas presented
in Section~\ref{sec:comp}, is a direction for future work, and will be
made available as an R package ``genlasso'' on the CRAN website
\cite{cran}.
There are several other directions for future research. We describe
three possibilities below.

\begin{itemize}
\item\textit{Specialized implementation for the fused lasso path
algorithm.} When $D$ is the fused lasso matrix corresponding to a
graph $\cG$, projecting onto the null space of $D_{-\cB}$ is
achieved by a simple coordinate-wise average on each connected
component of $\cG_{-\cB}$. It may therefore be possible to compute
the solution path $\hbeta_\lambda$ without having to use any linear
algebra, but by instead tracking the connectivity of $\cG$. This
could improve the computational efficiency of each iteration, and
could also lead to a parallelized approach (in which we work on each
connected component in parallel).

\item\textit{Number of steps until termination.} The number of steps
$T$ taken by our path algorithm, for a general $D$, is determined by how
many times dual coordinates leave the boundary. This is related to
an interesting problem in geometry studied by~\cite{donoho},
and investigating this connection could lead to a more definitive
statement about the algorithm's computational complexity.

\item\textit{Connection to forward stagewise regression.} When $D=I$, we
proved that our path algorithm yields the LARS path (when LARS is
run in its original, unmodified state) if we simply ignore dual
coordinates leaving the boundary. LARS can be modified to give
forward stagewise regression, which is the limit of forward stepwise
regression when the step size goes to zero (see~\cite{lars}).
A natural follow-up question is: can our
algorithm be changed to give this path too?
\end{itemize}

We believe that Lagrange duality deserves more
attention in the study of many convex optimization problems in
statistics. The dual problem can often have a complementary
(and interpretable) structure, which can offer both computational
benefits and novel mathematical or statistical insights into the
original problem.

\section*{\texorpdfstring{Acknowledgments.}{Acknowledgments}}
The authors thank Robert Tibshirani for his
many interesting suggestions and great support. Nick Henderson and
Michael Saunders provided valuable input with the computational
considerations. We also thank   Trevor Hastie for his
help with the LARS algorithm. Finally, we thank the
referees and especially the Editor for all of their help making this
paper more readable.

\begin{supplement}
\stitle{Proofs and technical details}
\slink[doi]{10.1214/11-AOS878SUPP} 
\sdatatype{.pdf}
\sfilename{aos878\_suppl.pdf}
\sdescription{A~supplementary document that contains a number of
proofs and technical details concerning ``The solution path of the
generalized lasso.''}
\end{supplement}

%

\printaddresses

\end{document}